\documentclass[12pt, final]{article}
\usepackage{a4}
\usepackage{amsmath}%
\usepackage{amstext}%
\usepackage{amssymb}%
\usepackage{amsxtra}
\usepackage{graphicx}
\usepackage{showkeys}%
\usepackage{epsfig}%
\usepackage{cite}
\usepackage{tikz}
\usepackage{caption}
\usepackage{longtable}
\usepackage{threeparttable,booktabs,multirow,lscape}
\usepackage{lipsum}
\usepackage{rotating}
\usepackage{multirow}
\usepackage{booktabs}
\usepackage{floatrow}
\usepackage{amsmath}
\usepackage{amssymb}
\usepackage{graphicx}
\usepackage[colorlinks,urlcolor=red]{hyperref}
\usepackage{graphicx}
\usepackage{subfigure}
\usepackage{listings}
\usepackage[ruled,vlined]{algorithm2e}

\setcounter{MaxMatrixCols}{10}

\setlength{\topmargin}{-1.5cm}
\setlength{\textheight}{23 cm}
\setlength{\textwidth}{16.5 cm}
\setlength{\oddsidemargin}{0cm}
\setlength{\evensidemargin}{0cm}

\newtheorem{theorem}{Theorem}

\newtheorem{lemma}[theorem]{Lemma}

\newtheorem{proposition}[theorem]{Proposition}

\newenvironment{remark}{\rem\rm}{\endrem}

\newcounter{unnumber}

\newenvironment{proof}{\prf\rm}{\hfill{$\blacksquare$}\endprf}
\newcommand{\R}{\mathbb{R}}%
\newcommand{\N}{\mathbb{N}}%
\newcommand{\prox}{\mathrm{prox}}%

\renewcommand{\>}{\right\rangle}
\newcommand{\<}{\left\langle}

\DeclareMathOperator*\dist{dist}%
\DeclareMathOperator*\ran{ran}%
\DeclareMathOperator*\argmin{argmin}

\title{Incremental proximal gradient scheme with penalization for constrained composite convex optimization problems}

\author{Narin Petrot\thanks{Department of Mathematics, Faculty of Science, Naresuan University,	Phitsanulok,  65000, Thailand, Center of Excellence in Nonlinear Analysis and Optimization, Faculty of Science, Naresuan University,
		Phitsanulok,  65000, Thailand, 
email: narinp@nu.ac.th.}
\and	
Nimit Nimana\thanks{Department of Mathematics, Faculty of Science, Khon Kaen University, Khon Kaen, 40002 Thailand,
	email: nimitni@kku.ac.th.}%
}

	\begin{document}
		\maketitle
	\begin{abstract}
		We consider the problem of minimizing a finite sum of convex functions subject to the set of minimizers of a convex differentiable  function. In order to solve the problem, an algorithm  combining the incremental proximal gradient method with smooth  penalization technique is proposed. We show the convergence of the generated sequence of iterates to an optimal solution of the optimization problems, provided that a condition expressed via the Fenchel conjugate of the constraint function is fulfilled.  Finally, the functionality of the method is illustrated by some numerical experiments addressing  image inpainting problems and generalized Heron problems with least squares constraints.
		
		\textbf{Key words:} convex optimization; Fenchel conjugate; incremental proximal method; penalization; proximal gradient algorithm.
		
		\textbf{MSC Classification:} 47H05, 65K05, 65K10, 90C25.
	\end{abstract}


		\section{Introduction}
		
 Let $F_i:\R^n\to\R$ be a function of the form
		$$F_i(x):=f_i(x)+h_i(x)$$ 
		for all  $i=1,\ldots,m$, where $f_i:\R^n\to\R$ is a convex function and $h_i:\R^n\to\mathbb{R}$ is a convex differentiable function such that $\nabla h_i$ is $L_i-$Lipschitz continuous. Let $g:\R^n\to\R$ be a convex differentiable function such that $\nabla g$ is $L_g-$Lipschitz continuous.  In this work, we focus on the problem
		\begin{eqnarray}\label{MP}%
			\begin{array}{ll}
				\textrm{minimize}\indent \sum_{i=1}^m F_i(x)\\
				\textrm{subject to}\indent x\in \arg\min g.
			\end{array}%
		\end{eqnarray}
		Let $\mathcal{S}$ denote the solution set of this problem and assume that $\mathcal{S}$ is nonempty.  In addition, we may assume without loss of generality that $\min g=0$.

		It is well known that the minimization of the sum of composite functions yields many applications to classification and regression models in machine learning. In these applications, a key feature is to deal with  a very large number of component (typically, convex and Lipschitz continuous) loss functions where the evaluation of  the proximal operators and/or gradients of the whole objective function seems very costly or even impossible; see \cite{BCN16,BL03,SLB17}.  Apart from the aforementioned classification and regression problems, the problems with additive structure also arise in sensor, wireless and peer-to-peer networks in which there is no central node that facilitates computation and communication. Moreover, the allocation of all the cost components $F_i$ at one node is sometimes not possible due to memory, computational power, or private information. For further discussion concerning sensor networks, see \cite{RN05, BHG08}.

		One of promising algorithms for performing this kind of problem structure is the so-called incremental type method. Its key idea  is to take steps subsequently  along the proximal operators and/or gradients of the component functions $F_i$  and to update the current iterate after processing each $F_i$. 	To be precise, let us recall the  classical incremental gradient method (IGM) for solving the minimization problem, that is,
			\begin{eqnarray}\label{MP-smooth}%
		\begin{array}{ll}
		\textrm{minimize}\indent \sum_{i=1}^m f_i(x)\\
		\textrm{subject to}\indent x\in X,
		\end{array}%
		\end{eqnarray}
	where	$f_i:\R^n\to\mathbb{R}$ is a convex differentiable function, for all  $i=1,\ldots,m$, and $X\subset\R^n$ is a nonempty closed convex set. The method is given as follows: if $x_k$ is the vector obtained after $k$ cycles, the vector $x_{k+1}$  is updated by  $\varphi_{1,k}:=x_k$, then computing
\begin{eqnarray}\label{IGM}\varphi_{i+1,k}:=\varphi_{i,k}-\alpha_k\nabla f_i(\varphi_{i,k}),\indent i=1,\ldots,m,
\end{eqnarray}
and finally generating  $x_{k+1}$  after one more cyclic $m$ steps as 
$$x_{k+1}:=\mathrm{proj}_{X}(\varphi_{m+1,k}),$$
where  $\alpha_k$ is a positive scalar parameter, and $\mathrm{proj}_{X}$ is the projection operator onto $X$. The advantage of IGM comparing with the classical gradient descent method, which have been analytically proved and even experimentally observed, is that it can attain a better asymptotic convergence to a solution of (\ref{MP-smooth}); see \cite{B12} for more details on this topic.

		Apart from the gradient based method,  there are many situations in which the objective functions  may not be smooth enough to apply IGM; in this case, we can consider the so-called incremental proximal method instead.  Consider the (nonsmooth) minimization problem (\ref{MP-smooth}) where the component function	$f_i:\R^n\to\R$ is convex for all  $i=1,\ldots,m$. The  incremental proximal method, which was initially proposed by Bertsekas \cite{B11}, is given as follows: if $x_k$ is the vector obtained after $k$ cycles, then the vector $x_{k+1}$  is updated in a similar fashion to IGM, except that the gradient step (\ref{IGM}) is replaced by the proximal step
		$$\varphi_{i+1,k}:=\prox_{\alpha_k f_i}\left(\varphi_{i,k}\right),\indent i=1,\ldots,m.$$
		For further discussion convergence result, see \cite{B11,B12,B15}.

		On the other hand,  the problem (\ref{MP-smooth})  involves  the constraints which  can be reformulated into the form (\ref{MP}) via a penalty function corresponding to the  constraint so that the set of all minimizers of the constructed penalty function is the considered constraint.  Attouch and Czarnecki \cite{AC10} initially investigated  a qualitative analysis of the optimal solutions of (\ref{MP})	from the perspective of a penalty-based dynamical system.  This  starting point stimulates  huge interest among research community to design and develop  numerical algorithms for solving the minimization problem  (\ref{MP}); see \cite{AC10, ACP11,ACP11-2,BB15,BC14,BCN17,BCN17-2,BN18,NP13,NP18, NP18-2,P12} for more insights into this research topic. It is worth noting that the common key feature of these proposed iterative methods is the penalization strategy, that is, if the function $g$ is smooth,  then the penalization term is evaluated by its gradient \cite{NP13,NP18,P12}.

Motivated by all the results mentioned above, we proposed an iterative scheme, which combines the incremental proximal and gradient method with penalization strategy, for solving the constrained minimization problem (\ref{MP}).
	 To deal with  the convergence result, we show that the generated sequence converges to an optimal solution of (\ref{MP}) by using the quasi-Fej\'er monotonicity technique. To illustrate the
	 theoretical results,  we also present  some numerical experiments addressing  the image reconstruction problem and the generalized Heron location  problem.

	In the remaining of this section we recall some necessary tools of convex analysis. The reader may consult \cite{BC11,BV10,Z02} for further details. For a convex function $f:\R^n\rightarrow\R$ we  let $f^*$ donote the (Fenchel) conjugate function of $f$, that is, the function $f^*:\R^n\rightarrow (-\infty,+\infty]$ such that  $$f^*(u):=\sup_{x\in \R^n}\{\langle u,x\rangle-f(x)\}$$ for all $u\in \R^n$.
		The subdifferential of $f$ at $x\in \R^n$, is the set
	$$\partial f(x):=\{v\in \R^n:f(y)\geq f(x)+\langle v,y-x\rangle \ \forall y\in \R^n\}.$$ 
We also let $\min f := \inf_{x \in \R^n} f(x)$ denote the optimal objective value of the function $f$ and let
$\arg\min f :=\{x \in \R^n: f(x) = \min f \}$ denote its set of global minima of $f$. 
	For $r>0$ and $x\in \R^n$, we let $\prox_{rf}(x)$ denote the proximal point of parameter $r$ of $f$ at $x$, which is the unique optimal solution of the (strongly convex) optimization problem
	$$\min_{u\in \R^n}f(u)+\frac{1}{2r}\|u-x\|^2.$$
	Note that $\prox_{rf}=(I+r\partial f)^{-1}$ and it is a single-valued operator. 

		Let $X\subseteq \R^n$ be a nonempty set. The indicator function of $X$  is the function $\delta_X:\R^n\rightarrow (-\infty,+\infty]$
		 which takes the value $0$ on $X$ and $+\infty$ otherwise. The subdifferential of the indicator function is the normal cone of $X$, that is,
		  $$N_X(x)=\{u\in \R^n:\langle u,y-x\rangle\leq 0 \ \forall y\in X\}$$ if $x\in X$ and
		$N_X(x)=\emptyset$ for $x\notin X$. For all $x\in X$, it holds that $u\in N_X(x)$ if and only if $\sigma_X(u)=\langle u,x\rangle$,
		where $\sigma_X : \R^n \rightarrow (-\infty,+\infty]$ is the support function of $X$ defined by  $$ \sigma_X(u)=\sup_{y\in X}\langle y,u\rangle.$$ Moreover, we let $\ran(N_X)$ denote the range
		of the normal cone $N_X$, that is, we have $p \in \ran(N_X)$ if and only if there exists $x \in X$
		such that $p \in N_X(x)$.

		\section{Algorithm and Convergence Result}

			In this section, we consider the convergence analysis of the  incremental proximal gradient method with smooth penalty term for solving (\ref{MP}). Firstly, we propose our main algorithm as shown in Algorithm \ref{algorithm-ergodic-smooth}.
			\vspace{0.3cm}
			
			\begin{algorithm}[H]\label{algorithm-ergodic-smooth}
				\SetAlgoLined
				\textbf{Initialization}: The positive sequences $(\alpha_k)_{k\geq1}$, $(\beta_k)_{k\geq1}$, and an arbitrary $x_1\in \R^n$. \\
				\textbf{Iterative Step}: For a given current iterate $x_{k}\in \R^n$ ($k\geq 1$), set 
				$$\varphi_{1,k}:=x_{k}-\alpha_k\beta_k\nabla g(x_k),$$
				and define 
				$$\varphi_{i+1,k}:=\prox_{\alpha_k f_i}\left(\varphi_{i,k}-\alpha_k\nabla h_i(\varphi_{i,k})\right),\hspace{1cm} i=1,\ldots,m,$$
				and 
				$$x_{k+1}=\varphi_{m+1,k}.$$
				
				\caption{IPGM with penalty term}
			\end{algorithm}
			\begin{remark}Algorithm \ref{algorithm-ergodic-smooth}  is different from \cite[Algorithm 3.1]{NP18-2}. In fact, in \cite{NP18-2}, the authors considered the problem (\ref{MP}) in the sense that $h=\sum_{i=1}^mh_i$ and performed the gradient $\nabla h(x_k)$ at each iteration $k$. However, the iterative scheme proposed here allows us to perform the gradient $\nabla h_i(\varphi_{i,k})$ at each sub-iteration $i$. Moreover, it is worth noting that Algorithm \ref{algorithm-ergodic-smooth} is very useful through its decentralized  setting which appears in many situations, for instance, decentralized network system or support vector machine learning problems; see \cite{NO09,RNV12}. 
		\end{remark}
		\vspace{0.3cm}

			
			For the convergence result,  the following hypotheses are assumed throughout this work:

			\begin{equation*}\label{H}
			\left\{		\begin{array}{ll} 
			\text{(H1) The subdifferential sum } \partial\left(\sum_{i=1}^m  f_i+\delta_{\arg\min g}\right)=\sum_{i=1}^m\partial f_i+N_{\arg\min g} \text{ holds};\\
            \text{(H2)	The sequence } (\alpha_k)_{k\geq1} \text{ is satisfying } \sum_{k=1}^{\infty}\alpha_k=+\infty \text{ and } \sum_{k=1}^{\infty}\alpha_k^2<+\infty;\\
			\text{(H3) } 0<\liminf_{k\to+\infty}\alpha_k\beta_k\leq\limsup_{k\to+\infty}\alpha_k\beta_k<\frac{2}{L_g}; \\
			\text{(H4) For all } p\in\mathrm{ran}(N_{\arg\min g}),  \sum_{k=1}^\infty\alpha_k\beta_k\left[g^*\left(\frac{p}{\beta_k}\right)-\sigma_{\arg\min g}\left(\frac{p}{\beta_k}\right)\right]<+\infty.
			\end{array}
			\right. 
			\end{equation*}%
			
				Some remarks relating to our assumptions are as follows.
		
		\begin{remark}\label{key-remark}
			\begin{itemize}
				\item[(i)] For the conditions which guarantee the  exact subdifferential sum formula in the condition (H1),  the reader may consult the book of Bauschke and Combettes \cite{BC11}.
				\item[(ii)] Note that the hypothesis (H3) is a relaxation of Assumption 4.1 (S3) in  \cite{NP18-2}. In fact, the superior limit in  \cite{NP18-2} is bounded above by $\frac{1}{L_g}$, but in this work it can be extended to $\frac{2}{L_g}$. This allows us to  consider  larger parameters $(\alpha_k)_{k\geq1}$ and $(\beta_k)_{k\geq1}$. An example of the sequences $(\alpha_k)_{k\geq1}$ and $(\beta_k)_{k\geq1}$ satisfying the conditions (H2) and (H3) is the real sequences $\alpha_k\sim\frac{1}{k}$ and $\beta_k\sim bk$ for every $k\geq1$, where $0<b<\frac{2}{L_g}$.
				\item[(iii)] The condition  (H4) was originated by  Attouch and Czarnecki \cite{AC10}. 
				For example,  for a function $g:\R^n\to\R$, it holds that $g\leq \delta_{\arg\min g}$ and so $g^*\geq (\delta_{\arg\min g})^*=\sigma_{\arg\min g}$, which yields $$g^*-\sigma_{\arg\min g}\geq 0.$$ 
				Note that if the function $g$ satisfies
				$$g \geq \frac{a}{2}\mathrm{dist}^2(\cdot,\arg\min g),$$
				where $a>0$, then we have $g^*(x)-\sigma_{\arg\min g}(x)\leq\frac{1}{2a}\|x\|^2$ for all $x \in \R^n$. Thus, for every $k \geq 1$, and all $p\in\mathrm{ran}(N_{\arg\min g})$, we have
				$$\alpha_k\beta_k\left[g^*\left(\frac{p}{\beta_k}\right)-\sigma_{\arg\min g}\left(\frac{p}{\beta_k}\right)\right]\leq \frac{\alpha_k}{2a\beta_k}\|p\|^2.$$
				Note that if $\sum_{k=1}^\infty\frac{1}{\beta_k^2}<+\infty$, then it follows that
				$$\sum_{k=1}^\infty\alpha_k\beta_k\left[g^*\left(\frac{p}{\beta_k}\right)-\sigma_{\arg\min g}\left(\frac{p}{\beta_k}\right)\right]<+\infty.$$ This inequality also holds for the sequences  satisfying the hypotheses (H2) and (H3).
			\end{itemize}
		\end{remark}
		\vspace{0.3cm}
	
					The following theorem describes the convergence of iterates.
				\vspace{0.3cm}
				
				\begin{theorem}\label{thm-main-2}
					The sequence  $(x_k)_{k\geq1}$ converges to a point in $\mathcal{S}$. 
				\end{theorem}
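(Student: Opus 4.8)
The plan is to show that the iterates are quasi-Fej\'er monotone with respect to $\mathcal{S}$ and then to pin down the limit by the Attouch--Czarnecki penalization device, which is exactly what hypotheses (H1) and (H4) are tailored for. Fix $x^*\in\mathcal S$. For each sub-iterate $i$ the update is a forward--backward step on $F_i=f_i+h_i$, so the proximal descent lemma (firm nonexpansiveness of $\prox_{\alpha_k f_i}$ together with the descent lemma for the $L_i$-Lipschitz gradient $\nabla h_i$) yields
\[
\|\varphi_{i+1,k}-x^*\|^2\le\|\varphi_{i,k}-x^*\|^2-2\alpha_k\bigl(F_i(\varphi_{i+1,k})-F_i(x^*)\bigr)-(1-\alpha_k L_i)\|\varphi_{i+1,k}-\varphi_{i,k}\|^2 .
\]
For the penalty step $\varphi_{1,k}=x_k-\alpha_k\beta_k\nabla g(x_k)$ I would expand the square and invoke the Baillon--Haddad cocoercivity of $\nabla g$ (using $\nabla g(x^*)=0$), giving
\[
\|\varphi_{1,k}-x^*\|^2\le\|x_k-x^*\|^2-\alpha_k\beta_k\Bigl(\tfrac{2}{L_g}-\alpha_k\beta_k\Bigr)\|\nabla g(x_k)\|^2 .
\]
This is where the enlarged range $\alpha_k\beta_k<2/L_g$ of (H3) is used: it makes the last term a genuine good (nonpositive) term, whereas the cruder bound $\|\nabla g(x_k)\|^2\le 2L_g\,g(x_k)$ employed in \cite{NP18-2} only tolerates $\alpha_k\beta_k<1/L_g$. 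Telescoping the $m$ proximal estimates and inserting the penalty estimate produces a master inequality for $\|x_{k+1}-x^*\|^2$.

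Two reductions must then be performed on the right-hand side. First, the objective values are anchored at the common point $x_k$ by writing $F_i(\varphi_{i+1,k})=F_i(x_k)+(F_i(\varphi_{i+1,k})-F_i(x_k))$; once boundedness of the iterates is known, each $F_i$ is Lipschitz on the relevant set and $\|\varphi_{i+1,k}-x_k\|=O(\alpha_k)$, so the correction is $O(\alpha_k)$ per step and contributes an $O(\alpha_k^2)$ quantity after multiplication by $2\alpha_k$, which is summable by (H2). Second, the aggregate suboptimality $\sum_i F_i(x_k)-\sum_i F_i(x^*)$ is treated via the exact subdifferential sum rule (H1): it furnishes $p\in N_{\arg\min g}(x^*)$ with $-p\in\partial(\sum_i f_i)(x^*)$, whence the subgradient inequality gives $-(\sum_i F_i(x_k)-\sum_i F_i(x^*))\le\langle p,x_k-x^*\rangle$. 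Applying Fenchel--Young for $g$ at $p/\beta_k$, the identity $\langle p,x^*\rangle=\sigma_{\arg\min g}(p)$ valid for $p\in N_{\arg\min g}(x^*)$, and positive homogeneity of $\sigma_{\arg\min g}$, this is bounded by $\beta_k g(x_k)+\beta_k\bigl[g^*(p/\beta_k)-\sigma_{\arg\min g}(p/\beta_k)\bigr]$; after the factor $2\alpha_k$ the gap piece is summable \emph{precisely} by (H4).

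The main obstacle is to reconcile the leftover Fenchel term $2\alpha_k\beta_k g(x_k)$ with the penalty decrease while keeping the sharper constant $2/L_g$. If one cancels it naively against the convexity bound $\langle\nabla g(x_k),x_k-x^*\rangle\ge g(x_k)$ and then absorbs the squared penalty term $\alpha_k^2\beta_k^2\|\nabla g(x_k)\|^2$ through the Bregman estimate $\langle\nabla g(x_k),x_k-x^*\rangle\ge g(x_k)+\tfrac1{2L_g}\|\nabla g(x_k)\|^2$, one only recovers the old window $\alpha_k\beta_k<1/L_g$. To reach $2/L_g$ I would instead spend the whole cross term on cocoercivity, retaining the good term $-\alpha_k\beta_k(\tfrac2{L_g}-\alpha_k\beta_k)\|\nabla g(x_k)\|^2$, and control the residual $g$-contributions not pointwise but in the aggregate, exploiting that (H4) already caps $\sum_k\alpha_k\beta_k[g^*(p/\beta_k)-\sigma_{\arg\min g}(p/\beta_k)]$ and attempting an Abel/telescoping summation of $g(x_k)$ made possible by the slow variation of $(\alpha_k\beta_k)_k$. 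This bookkeeping is where I expect the real work to lie, since it must simultaneously keep the incremental errors summable under (H2) and make the penalty, gradient and gap terms cooperate under the relaxed (H3)--(H4). The outcome is the quasi-Fej\'er inequality
\[
\|x_{k+1}-x^*\|^2\le\|x_k-x^*\|^2+\ep_k-\eta_k,\qquad \sum_{k\ge1}\ep_k<+\infty,\quad \eta_k\ge0,
\]
where $\eta_k$ gathers $\alpha_k\beta_k(\tfrac2{L_g}-\alpha_k\beta_k)\|\nabla g(x_k)\|^2$ and the proximal displacements, and $\ep_k$ gathers the $2\alpha_k\beta_k$-weighted Fenchel gap and the $O(\alpha_k^2)$ errors.

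Finally I would run the standard endgame. Summability of $(\ep_k)$ forces $(\|x_k-x^*\|)_k$ to converge for every $x^*\in\mathcal S$, so $(x_k)$ is bounded (retroactively validating the Lipschitz bounds above) and $\sum_k\eta_k<+\infty$. By (H3) the coefficient of $\|\nabla g(x_k)\|^2$ is bounded below by a positive constant, hence $\sum_k\|\nabla g(x_k)\|^2<+\infty$ and $\nabla g(x_k)\to0$; by continuity any cluster point $\bar x$ satisfies $\nabla g(\bar x)=0$, i.e.\ $\bar x\in\arg\min g$, so cluster points are feasible. For optimality I would sum the master inequality and use $\sum_k\alpha_k=+\infty$ to extract a subsequence along which $\sum_i F_i(x_k)\to\sum_i F_i(x^*)$; lower semicontinuity then gives $\sum_i F_i(\bar x)\le\sum_i F_i(x^*)$, so the feasible cluster point $\bar x$ lies in $\mathcal S$. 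Since $\|x_k-x^*\|$ converges for each $x^*\in\mathcal S$ and a subsequence tends to $\bar x\in\mathcal S$, the whole sequence converges to $\bar x$, which is the assertion.
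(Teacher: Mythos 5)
Your overall architecture is the same as the paper's: a quasi-Fej\'er inequality obtained through the Attouch--Czarnecki Fenchel device, feasibility of cluster points from the vanishing of $g(x_k)$ and $\nabla g(x_k)$, optimality of a cluster point via a weighted-sum argument using $\sum_k\alpha_k=+\infty$, and finally Combettes' theorem (Proposition \ref{fejer}). But your proposal stalls exactly at the point where the paper does its real work, and the repair you sketch there does not close the gap. After Fenchel--Young you are left with $+2\alpha_k\beta_k g(x_k)$ on the right-hand side; you propose to spend the \emph{whole} cross term $\langle\nabla g(x_k),x_k-x^*\rangle$ on cocoercivity and then to control the $g(x_k)$ residuals ``in the aggregate'' by an Abel/telescoping summation. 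This is circular: the summability $\sum_k\alpha_k\beta_k g(x_k)<+\infty$ is itself one of the \emph{conclusions} drawn from the quasi-Fej\'er inequality (item (ii) of Lemma \ref{key-lemma4-2}), and nothing in (H2)--(H4) provides it beforehand, so it cannot be an input to establishing that inequality. The paper's Lemma \ref{IPGP-lemma-12} resolves this with a convex-combination split of the cross term: a fraction $\frac{1}{1+\eta}$ goes to cocoercivity, giving the window $\frac{2}{L_g(1+\eta)}$, while the fraction $\frac{\eta}{1+\eta}$ goes to the convexity bound $\langle\nabla g(x_k),x_k-u\rangle\geq g(x_k)$, producing $-\frac{2\eta}{1+\eta}\alpha_k\beta_k g(x_k)$; half of this is kept as a good term on the left, and the other half exactly cancels the $+g(x_k)$ created by Fenchel--Young, at the price of evaluating the gap at the rescaled point $\frac{2(1+\eta)}{\eta}\cdot\frac{p}{\beta_k}$ --- legitimate because $N_{\arg\min g}(u)$ is a cone, so (H4) still applies. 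Since the inequality in (H3) is strict, $\eta$ can be chosen small enough that $\limsup_k\alpha_k\beta_k<\frac{4}{L_g(2+3\eta)}$, and this is precisely how the relaxed threshold $\frac{2}{L_g}$ is attained. Without this (or an equivalent) device, your master inequality is simply not quasi-Fej\'er.

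There is a second, related circularity: your derivation of the master inequality invokes Lipschitz continuity of the $F_i$ on bounded sets and $\|\varphi_{i+1,k}-x_k\|=O(\alpha_k)$ ``once boundedness of the iterates is known,'' while boundedness is then deduced ``retroactively'' from the quasi-Fej\'er property. The paper avoids this by proving Lemma \ref{IPGP-lemma-12} purely from the monotonicity of $\partial f_i$ with subgradients $v_i\in\partial f_i(u)$ fixed at the solution $u$, absorbing the displacement terms $\|x_k-\varphi_{i+1,k}\|^2$ into the left-hand side by Young's inequality; no bound on the iterates and no Lipschitz constants enter that lemma. The boundedness-based estimates (the constants $M$, $K$, $c$) appear only afterwards, in the proof of Theorem \ref{thm-main-2}, where boundedness is already secured by Lemma \ref{key-lemma4-2}. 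Your endgame (summing the inequality retaining the objective terms, using $\sum_k\alpha_k=+\infty$ to produce a minimizing subsequence, lower semicontinuity, then Fej\'er convergence) matches the paper in substance --- indeed your subsequence extraction is a slightly more elementary substitute for the paper's Silverman--Toeplitz averaging --- but it only becomes a proof once the two gaps above are repaired.
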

				\vspace{0.3cm}	
				
				In order to prove Theorem \ref{thm-main-2}, we need to recall the concept of quasi-Fej\'{e}r monotone as follows.	Let $C$ be a nonempty subset of $\R^n$. We say that a sequence $(x_k)_{k\geq1}\subset \R^n$ is   \textit{quasi-Fej\'{e}r monotone} relative  to $C$ if for each $c\in C$, there exist a sequence $(\delta_k)_{k\geq1}\subset [0,+\infty)$ with $\sum_{k=1}^{\infty}\delta_k=+\infty$ and $k_0\in\N$ such that
				$$\|x_{k+1}-c\|^2\leq\|x_k-c\|^2+\delta_k,\indent\forall k\geq k_0.$$

				The following proposition provides an essential property of a quasi-Fej\'{e}r monotone sequence; see Combettes \cite{C01} for further information.
				\begin{proposition}\label{fejer} \cite[Theorem 3.11]{C01} Let  $(x_k)_{k\geq1}$ be  a quasi-Fej\'{e}r monotone sequence relative to a nonempty subset $C\subset \R^n$. If at least one sequential cluster point of $(x_k)_{k\geq1}$ lies in $C$, then $(x_k)_{k\geq1}$ converges   to a point in $C$.
				\end{proposition}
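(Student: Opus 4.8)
The plan is to derive the conclusion from two ingredients: (a) for every fixed $c\in C$ the scalar sequence $\left(\|x_k-c\|^2\right)_{k\geq1}$ converges, and (b) by hypothesis at least one subsequential limit of $(x_k)_{k\geq1}$ already lies in $C$. Combining these through a short squeezing argument will force the whole sequence to converge to that cluster point. No structure on $C$ beyond nonemptiness is needed.

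First I would establish (a), which is the analytic core. Fix $c\in C$ and let $(\delta_k)_{k\geq1}$ and $k_0$ be the summable error sequence and index furnished by the definition of quasi-Fej\'er monotonicity, so that $u_{k+1}\leq u_k+\delta_k$ for all $k\geq k_0$, where $u_k:=\|x_k-c\|^2\geq0$ and the tail sums $s_k:=\sum_{j=k}^{\infty}\delta_j$ are finite with $s_k\to0$. The device is to pass to the corrected sequence $w_k:=u_k+s_k$: since $s_{k+1}=s_k-\delta_k$, one computes $w_{k+1}=u_{k+1}+s_{k+1}\leq u_k+\delta_k+s_k-\delta_k=w_k$ for all $k\geq k_0$. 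Thus $(w_k)_{k\geq k_0}$ is nonincreasing and bounded below by $0$, hence convergent; and because $s_k\to0$, the sequence $u_k=w_k-s_k$ converges as well. This proves (a), and in particular that $\lim_k\|x_k-c\|$ exists for each $c\in C$.

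Boundedness of $(x_k)_{k\geq1}$ then follows immediately from (a): choosing any single $c\in C$ (the set is nonempty), the convergent, hence bounded, sequence $\left(\|x_k-c\|\right)_{k\geq1}$ confines the iterates to a ball, which legitimizes talk of cluster points. Finally I would invoke the cluster-point hypothesis. Let $\bar x\in C$ be a subsequential limit, say $x_{k_j}\to\bar x$. Since $\bar x\in C$, step (a) guarantees that the full sequence $\left(\|x_k-\bar x\|^2\right)_{k\geq1}$ converges to some $\ell\geq0$; but along the subsequence $\|x_{k_j}-\bar x\|^2\to0$, so necessarily $\ell=0$. Hence $\|x_k-\bar x\|\to0$, i.e. $x_k\to\bar x\in C$, which is the claim.

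The only genuine obstacle is step (a): it is exactly here that the summability of the perturbation terms $(\delta_k)_{k\geq1}$ is indispensable, and the corrected-sequence trick $w_k=u_k+s_k$ is the cleanest way to turn the quasi-monotone inequality into an honest monotone one to which monotone convergence applies. Once (a) is secured, the boundedness observation and the squeezing step that upgrades subsequential to full convergence are entirely routine.
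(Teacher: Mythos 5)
Your proof is correct, and it is in substance the same argument as in the source: the paper itself does not prove this proposition but quotes it from Combettes \cite[Theorem~3.11]{C01}, where the reasoning is exactly your two ingredients --- convergence of $\|x_k-c\|^2$ for each fixed $c\in C$ via the tail-sum correction $w_k:=\|x_k-c\|^2+\sum_{j\geq k}\delta_j$, followed by the subsequence squeeze that upgrades one cluster point in $C$ to convergence of the whole sequence. One point worth flagging: your argument uses, correctly, that the perturbations are summable, $\sum_{k=1}^{\infty}\delta_k<+\infty$, whereas the paper's displayed definition of quasi-Fej\'er monotonicity misprints this as $\sum_{k=1}^{\infty}\delta_k=+\infty$. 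Under the literal printed condition the proposition would be false (e.g.\ let $x_k$ alternate between a point of $C$ and any other point, with $\delta_k$ any nonnegative divergent sequence dominating the discrepancy), so your reading of the hypothesis is the intended one; with it, the monotone-plus-summable-perturbation step, the boundedness observation, and the identification of the limit are all sound.
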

			
				The following additional key tool  is known as the Silverman-Toeplitz theorem \cite{K04}.
				
				\begin{proposition}\label{Silverman-Toeplitz}
					Let  $(\alpha_k)_{k\geq1}$ be a positive real sequence with $\sum_{k=1}^{\infty}\alpha_k=+\infty$. If $(u_k)_{k\geq1}\subset\mathbb{R}^n$ is a sequence such that $\lim_{k\to+\infty}u_k=u\in\mathbb{R}^n$, then $\lim_{l\to+\infty}\frac{\sum_{k=1}^l\alpha_ku_k}{\sum_{k=1}^{l}\alpha_k}=u$.
				\end{proposition}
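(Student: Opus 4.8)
The plan is to reduce the statement to the classical $\varepsilon$-argument for Cesàro-type averages by rewriting the weighted mean as a weighted average of the deviations $u_k - u$ and then splitting the summation range at a threshold past which these deviations are uniformly small. Throughout, write $A_l := \sum_{k=1}^l \alpha_k$ and $s_l := \frac{\sum_{k=1}^l \alpha_k u_k}{A_l}$.

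First I would record the two consequences of the hypotheses on $(\alpha_k)_{k\geq1}$ that drive everything: positivity gives $A_l > 0$ for every $l$, so that $s_l$ is well defined, and $\sum_{k=1}^\infty \alpha_k = +\infty$ gives $A_l \to +\infty$. Since the weights $\alpha_k/A_l$ for $k = 1,\ldots,l$ sum to $1$, I can center the average at $u$ and write
$$s_l - u = \frac{\sum_{k=1}^l \alpha_k (u_k - u)}{A_l},$$
so it suffices to prove that the right-hand side tends to $0$.

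Next, fix $\varepsilon > 0$. Using $u_k \to u$, I would choose $N \in \N$ with $\|u_k - u\| < \varepsilon/2$ for all $k > N$. Applying the triangle inequality and splitting the sum at $N$,
$$\|s_l - u\| \leq \frac{1}{A_l}\sum_{k=1}^N \alpha_k \|u_k - u\| + \frac{1}{A_l}\sum_{k=N+1}^l \alpha_k \|u_k - u\|.$$
The tail term is bounded by $\tfrac{\varepsilon}{2}\cdot\tfrac{A_l - A_N}{A_l} \leq \tfrac{\varepsilon}{2}$, using $\|u_k - u\| < \varepsilon/2$ on the tail together with $\sum_{k=N+1}^l \alpha_k \leq A_l$. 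In the head term the numerator $C := \sum_{k=1}^N \alpha_k \|u_k - u\|$ is a fixed constant once $N$ is frozen; since $A_l \to +\infty$, there is $L \in \N$ with $C/A_l < \varepsilon/2$ whenever $l > L$. Combining the two estimates yields $\|s_l - u\| < \varepsilon$ for all $l > \max\{N, L\}$, which is exactly the assertion.

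The argument is essentially routine, and I do not expect a genuine obstacle; the only point demanding care is the order in which the two thresholds are selected. The threshold $N$ is dictated solely by the convergence $u_k \to u$ and must be fixed \emph{before} $L$ is chosen, because fixing $N$ is what turns the head contribution into a constant $C$; the divergence $A_l \to +\infty$ is then precisely the ingredient that drives $C/A_l$ below $\varepsilon/2$. The vector-valued setting contributes nothing essential, since passing to norms via the triangle inequality reduces the whole estimate to the scalar case.
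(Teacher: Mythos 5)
Your proof is correct and complete: the centering identity $s_l-u=\frac{\sum_{k=1}^l\alpha_k(u_k-u)}{A_l}$, the split of the sum at the threshold $N$, the bound $\frac{A_l-A_N}{A_l}\leq 1$ on the tail, and the use of $A_l\to+\infty$ (which is exactly where $\sum_{k=1}^\infty\alpha_k=+\infty$ enters) to kill the frozen head constant $C$ are all in order, and you correctly flag that $N$ must be fixed before $L$. One point of comparison with the paper: the paper gives \emph{no} proof of this proposition at all --- it states it as a known result (the Silverman--Toeplitz theorem) and cites Kiwiel \cite{K04}, so there is no in-paper argument to match yours against. Your argument is the canonical direct proof of this special case; in the language of the general Silverman--Toeplitz theorem, you are verifying by hand that the triangular array of weights $a_{l,k}=\alpha_k/A_l$ is a regular summability method (nonnegative rows summing to $1$, with each fixed column $a_{l,k}\to 0$ as $l\to+\infty$, the latter being your observation $C/A_l\to 0$), which is precisely the content the paper imports by citation.
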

				
				We are  in a position to prove Theorem \ref{thm-main-2}, which is our main theorem.
				
				\begin{proof} Let $u\in\mathcal{S}$ and $k\geq1$ be fixed. For each $i=1,\ldots,m$, we have from the subdifferential inequality of $f_i$ that
					\begin{eqnarray*}
						\<\varphi_{i,k}-\varphi_{i+1,k}-\alpha_k\nabla h_i(\varphi_{i,k}),u-\varphi_{i+1,k}\>\leq \alpha_k(f_i(u)-f_i(\varphi_{i,k})).
					\end{eqnarray*}
					Moreover, by the convexity of $h_i$, we have	
					\begin{eqnarray}\label{thm-eqn1}
					&&2\<\varphi_{i,k}-\varphi_{i+1,k},u-\varphi_{i+1,k}\>\nonumber\\
					&\leq& 2\alpha_k(f_i(u)-f_i(\varphi_{i,k}))+2\alpha_k\<\nabla h_i(\varphi_{i,k}),u-\varphi_{i+1,k}\>\nonumber\\
					&=&2\alpha_k(f_i(u)-f_i(\varphi_{i,k}))+2\alpha_k\<\nabla h_i(\varphi_{i,k}),u-\varphi_{i,k}\>+2\alpha_k\<\nabla h_i(\varphi_{i,k}),\varphi_{i,k}-\varphi_{i+1,k}\>\nonumber\\
					&\leq&2\alpha_k(f_i(u)-f_i(\varphi_{i,k}))+2\alpha_k(h_i(u)-h_i(\varphi_{i,k}))+2\alpha_k\<\nabla h_i(\varphi_{i,k}),\varphi_{i,k}-\varphi_{i+1,k}\>\nonumber\\
					&\leq&2\alpha_k(F_i(u)-F_i(\varphi_{1,k}))+2\alpha_k(F_i(\varphi_{1,k})-F_i(\varphi_{i,k}))\nonumber\\
					&&+\alpha_k^2\|\nabla h_i(\varphi_{i,k})\|^2+\|\varphi_{i,k}-\varphi_{i+1,k}\|^2\nonumber\\
					&\leq&2\alpha_k(F_i(u)-F_i(\varphi_{1,k}))+2\alpha_k(F_i(\varphi_{1,k})-F_i(\varphi_{i,k}))\nonumber\\
					&&+2\alpha_k^2\|\nabla h_i(\varphi_{i,k})-\nabla h_i(u)\|^2+2\alpha_k^2\|\nabla h_i(u)\|^2+\|\varphi_{i,k}-\varphi_{i+1,k}\|^2.
					\end{eqnarray}
					Note that 
					\begin{eqnarray*}\|\varphi_{i+1,k}-u\|^2-\|\varphi_{i,k}-u\|^2+\|\varphi_{i+1,k}-\varphi_{i,k}\|^2=2\<\varphi_{i,k}-\varphi_{i+1,k},u-\varphi_{i+1,k}\>.
					\end{eqnarray*}
					Combining this equality with (\ref{thm-eqn1}), we obtain
					\begin{eqnarray*}\|\varphi_{i+1,k}-u\|^2-\|\varphi_{i,k}-u\|^2
						&\leq&2\alpha_k(F_i(u)-F_i(\varphi_{1,k}))\\
						&&+2\alpha_k(F_i(\varphi_{1,k})-F_i(\varphi_{i,k}))\\
						&&+2\alpha_k^2\|\nabla h_i(\varphi_{i,k})-\nabla h_i(u)\|^2+2\alpha_k^2\|\nabla h_i(u)\|^2.
					\end{eqnarray*}
					Summing up this inequality for all $i=1,\ldots,m$, we have 
					\begin{eqnarray*}\|\varphi_{m+1,k}-u\|^2-\|\varphi_{1,k}-u\|^2
						&\leq&2\alpha_k(F(u)-F(\varphi_{1,k}))\\
						&&+2\alpha_k\left(F(\varphi_{1,k})-\sum_{i=1}^mF_i(\varphi_{i,k})\right)\\
						&&+2\alpha_k^2\sum_{i=1}^m\|\nabla h_i(\varphi_{i,k})-\nabla h_i(u)\|^2+2\alpha_k^2\sum_{i=1}^m\|\nabla h_i(u)\|^2.
					\end{eqnarray*} 
					
					Since $(\varphi_{i,k})_{k\geq1}$ is bounded for all $i=1,\ldots,m$, there exists $M>0$ such that 
					\begin{eqnarray}\label{thm-eqn-M}\max\left\{\sum_{i=1}^m\|\nabla h_i(\varphi_{i,k})-\nabla h_i(u)\|^2,\max_{1\leq i\leq m}\|\nabla h_i(\varphi_{i,k})\|\right\}\leq M,
					\end{eqnarray}
					and so
					\begin{eqnarray}\label{thm-eqn2}\|\varphi_{m+1,k}-u\|^2-\|\varphi_{1,k}-u\|^2
					&\leq&2\alpha_k(F(u)-F(\varphi_{1,k}))\nonumber\\
					&&+2\alpha_k\left(F(\varphi_{1,k})-\sum_{i=1}^mF_i(\varphi_{i,k})\right)\nonumber\\
					&&+2\alpha_k^2M+2\alpha_k^2\sum_{i=1}^m\|\nabla h_i(u)\|^2.
					\end{eqnarray}
					
					On the other hand, since $\partial f_i$ maps a bounded subset into a bounded nonempty subset of $\R^n$ (see \cite[Proposition 16.20 (iii)]{BC11}), we have from the definition of $\partial f_i$ that for each $i=1,\ldots,m$, there exists $K_i>0$ such that
					\begin{eqnarray*}\|\varphi_{i,k}-\varphi_{i+1,k}-\alpha_k\nabla h_i(\varphi_{i,k})\|\leq\alpha_kK_i,
					\end{eqnarray*}
					and so
					\begin{eqnarray*}\|\varphi_{i,k}-\varphi_{i+1,k}\|\leq\alpha_kK_i+\alpha_k\|\nabla h_i(\varphi_{i,k})\|\leq\alpha_kK,
					\end{eqnarray*}
					where 
					\begin{eqnarray*}\label{thm-eqn2-K}K:=M+\max_{1\leq i \leq m}K_i.
					\end{eqnarray*}
					
					Note that
					\begin{eqnarray*}\|\varphi_{i,k}-\varphi_{1,k}\|\leq\sum_{j=1}^{i-1}\|\varphi_{j,k}-\varphi_{j+1,k}\|\leq\alpha_kiK.
					\end{eqnarray*}
					Moreover, since $\bigcup_{i=1}^m\{\varphi_{i,k}:k\geq 1\}$ is  bounded, by using \cite[Proposition 16.20 (ii)]{BC11} again, we know that the functions $f_i$ and $h_i$ are Lipschitz continuous on all bounded sets.  For all $i=1,\ldots,m$, there exists the Lipschitz constant $c_i>0$ such that
					\begin{eqnarray*}F_i(\varphi_{1,k})-F_i(\varphi_{i,k})\leq c_i\|\varphi_{1,k}-\varphi_{i,k}\|\leq c\|\varphi_{1,k}-\varphi_{i,k}\|\leq\alpha_kicK,
					\end{eqnarray*}
					where $c:=\max_{1\leq i\leq m}c_i$. This yields
					\begin{eqnarray*}F(\varphi_{1,k})-\sum_{i=1}^mF_i(\varphi_{i,k})\leq \alpha_kcK\sum_{i=1}^mi=\alpha_kcK\frac{m(m+1)}{2}.
					\end{eqnarray*}
					
					Combining this relation with (\ref{thm-eqn2}), we obtain
					\begin{eqnarray*}\|\varphi_{m+1,k}-u\|^2-\|\varphi_{1,k}-u\|^2
						&\leq&2\alpha_k(F(u)-F(\varphi_{1,k}))+\alpha_k^2cKm(m+1)\\
						&&+2\alpha_k^2M+2\alpha_k^2\sum_{i=1}^m\|\nabla h_i(u)\|^2.
					\end{eqnarray*}
					
					On the other hand, by the definition of $\varphi_{1,k}$, we note that
					\begin{eqnarray}\label{lemma-eqn-12-4-1}
					\|\varphi_{1,k}-u\|^2&=&\|\varphi_{1,k}-x_k\|^2+\|x_k-u\|^2+2\<\varphi_{1,k}-x_k,x_k-u\>\nonumber\\
					&=&\alpha_k^2\beta_k^2\|\nabla g(x_k)\|^2+\|x_k-u\|^2-2\alpha_k\beta_k\<\nabla g(x_k),x_k-u\>.
					\end{eqnarray}
					
					 Thanks to the Baillon-Haddad theorem \cite[Corollary 18.16]{BC11}, we know that $\nabla g$ is $\frac{1}{L_g}$-cocoercive. By using $\nabla g(u)=0$, we have
					\begin{eqnarray*}
						2\alpha_k\beta_k\<\nabla g(x_k),x_k-u\>
						&=&2\alpha_k\beta_k\<\nabla g(x_k)-\nabla g(u),x_k-u\>\nonumber\\
						&\geq&\frac{2\alpha_k\beta_k}{L_g}\|\nabla g(x_k)-\nabla g(u)\|^2=\frac{2\alpha_k\beta_k}{L_g}\|\nabla g(x_k)\|^2,
					\end{eqnarray*}
					which implies that
					\begin{eqnarray}\label{lemma-eqn-12-5-1}
					-2\alpha_k\beta_k\<\nabla g(x_k),x_k-u\>
					&\leq&-\frac{2\alpha_k\beta_k}{L_g}\|\nabla g(x_k)\|^2.
					\end{eqnarray}
					
					Thus, the inequalities (\ref{lemma-eqn-12-4-1}) and (\ref{lemma-eqn-12-5-1}) imply that
					\begin{eqnarray*}\|\varphi_{1,k}-u\|^2\leq\|x_k-u\|^2+\alpha_k\beta_k\left(\alpha_k\beta_k-\frac{2}{L_g}\right)\|\nabla g(x_k)\|^2.
					\end{eqnarray*}
					
					Using the last two inequalities and the assumption (H3), it follows that there exists $k_0\in\N$ such that 
					\begin{eqnarray*}\|x_{k+1}-u\|^2-\|x_k-u\|^2
						&\leq&2\alpha_k(F(u)-F(\varphi_{1,k}))+\alpha_k^2cKm(m+1)\\
						&&+2\alpha_k^2M+2\alpha_k^2\sum_{i=1}^m\|\nabla h_i(u)\|^2, \indent\forall k\geq k_0.
					\end{eqnarray*}

					Now, since $(x_k)_{k\geq1}$ is bounded, we let $z\in\R^n$ be its sequential cluster point and 	a subsequence $(x_{k_j})_{j\geq1}$ of $(x_k)_{k\geq1}$ be such that $x_{k_j}\to z$. By Lemma \ref{key-lemma4-2} (iii) and (iv) (see, Appendix A), we have $\varphi_{1,k_j}\to z$ and $z\in\argmin g$. Thus,  for every $k_j\geq k_0$, we have
					\begin{eqnarray*}
						2\alpha_{k_j}(F(\varphi_{1,k_j})-F(u))	&\leq&\|x_{k_j}-u\|^2-\|x_{{k_j}+1}-u\|^2\\
						&&+\alpha_{k_j}^2\left(cKm(m+1)+2M+2\sum_{i=1}^m\|\nabla h_i(u)\|^2\right),
					\end{eqnarray*}
					which yields
					\begin{eqnarray*}
						\sum_{k=k_0}^{k_j}\alpha_{k}(F(\varphi_{1,k})-F(u))	&\leq&\frac{\|x_{1}-u\|^2}{2}-\frac{\|x_{{k_j}+1}-u\|^2}{2}+M'\sum_{k=k_0}^{k_j}\alpha_{k}^2,
					\end{eqnarray*}
					where $M':=\frac{cKm(m+1)}{2}+M+\sum_{i=1}^m\|\nabla h_i(u)\|^2$.	Hence, we have
					\begin{eqnarray*}
						\frac{\sum_{k=k_0}^{k_j}\alpha_{k}(F(\varphi_{1,k})-F(u))}{\sum_{k=k_0}^{k_j}\alpha_{k}}	&\leq&\frac{\|x_{1}-u\|^2}{2\sum_{k=k_0}^{k_j}\alpha_{k}}+M'\frac{\sum_{k=k_0}^{k_j}\alpha_{k}^2}{\sum_{k=k_0}^{k_j}\alpha_{k}}.
					\end{eqnarray*}
					
					Consequently, by the assumption (H2) and Proposition \ref{Silverman-Toeplitz}, we obtain
					\begin{eqnarray*}
						\liminf_{j\to+\infty}	\frac{\sum_{k=k_0}^{k_j}\alpha_{k}(F(\varphi_{1,k})-F(u))}{\sum_{k=k_0}^{k_j}\alpha_{k}}	\leq0.
					\end{eqnarray*}
					The convexity of $F$ together with Proposition \ref{Silverman-Toeplitz} also yields 
					\begin{eqnarray*}
						F(z)\leq\liminf_{j\to+\infty}	F\left(\frac{\sum_{k=k_0}^{k_j}\alpha_{k}\varphi_{1,k}}{\sum_{k=k_0}^{k_j}\alpha_{k}}\right)\leq F(u).
					\end{eqnarray*}
					Since $u\in\mathcal{S}$ is arbitrary, we have $z\in\mathcal{S}$. Therefore, by Proposition \ref{fejer},  the sequence $(x_k)_{k\geq1}$ converges to a point in $\mathcal{S}$. 
				\end{proof}
				
					Some remarks relating to Theorem \ref{thm-main-2} are as follows.
					
						\begin{remark}\label{key-remark}
				\begin{itemize}
					\item[(i)] One can obtain a  convergence result as Theorem \ref{thm-main-2} in a general setting of  a proper convex lower semicontinuous objective  function $f_i:\R^n\to(-\infty,+\infty]$, provided that the Lipschitz continuity relative to all bounded subsets of the functions $f_i$ and $h_i$ and the fact that the subdifferential of $f_i$ maps bounded subsets of $\R^n$ into bounded nonempty subsets of $\R^n$ are imposed. The  properties that the functions $f_i, h_i$  are Lipschitz continuous on all bounded subsets of $\R^n$ is typically assumed in order to guarantee the non-ergodic convergence of incremental proximal type schemes; see, for instance, \cite{B11,B12}. In fact, there are several loss functions in machine learning which satisfy the Lipschitz continuous property, for instance, the hinge, logistic, and Huber  loss functions; see \cite{RDCPV04} for further discussion. 
					\item[(ii)] One can also obtain a weak ergodic convergence of the sequence $(x_k)_{k\geq1}$ in a general setting of real Hilbert space by slightly modifying the proofs of Theorem 3.1 and Corollary 4.1 in \cite{NP18-2}.
				\end{itemize}
			\end{remark}


		\section{Numerical Examples}
		
			In this section, we demonstrate the effectiveness of the proposed  algorithm by applying  to solve the image reconstruction problem addressing the image inpainting and the generalized Heron problems. All the experiments were performed under MATLAB 9.6 (R2019a) running on a MacBook Pro 13-inch, 2019 with a 2.4 GHz Intel Core i5 processor and 8 GB 2133 MHz LPDDR3 memory. 

	\subsection{Image Inpainting}
Let $n:=\ell_1\times\ell_2$ and $X\in\R^{\ell_1\times \ell_2}$ be an ideal complete image. Let  $x\in\R^n$ represent the  vector generated by vectorizing the  image $X$. Let $\mathbf{b}\in\R^n$ be the marked image and $\mathbf{B}\in\R^{n\times n}$ be the diagonal matrix where  $\mathbf{B}_{i,i}=0$ if the pixel $i$ in the marked image $\mathbf{b}$ is missing (in our experiments, we set it to be black) and $\mathbf{B}_{i,i}=1$ otherwise, for $i=1,\ldots,n$. 
In a traditional way, the image inpainting problem aims to reconstruct the clean image $x$ from the marked image $\mathbf{b}$ by solving   the unconstrained nonsmooth optimization problem 	\begin{eqnarray}\label{inpaint-pb-trad}%
\begin{array}{ll}
\textrm{minimize }\indent  \lambda_1\|Wx\|_1+\frac{\lambda_2}{2}\|x\|^2+\frac{1}{2}\|\mathbf{B}\cdot-\mathbf{b}\|^2\\
\textrm{subject to}\indent x\in\R^n,\\
\end{array}%
\end{eqnarray} 
where $\lambda_1,\lambda_2>0$ are the penalization parameters, and $W$ is the inverse discrete Haar wavelet transform. The term $\|Wx\|_1$ is to deduce the sparsity of the image under the wavelet transformation, and the term $\frac{1}{2}\|x\|^2$ is to deduce the uniqueness of the solution. Note that   $W^\top W=WW^\top=I$ and so $\|W^\top W\|=1$. For more details of wavelet-based inpainting, see \cite{SMF10}. Meanwhile, in our experiment, we consider the basic structure of the ill-conditional linear inverse problem $\mathbf{B}x=\mathbf{b}$ and make use of the regularized tools but under the framework of the  nonsmooth convex constrained minimization problem
		\begin{eqnarray}\label{inpaint-pb}%
			\begin{array}{ll}
				\textrm{minimize }\indent  \lambda_1\|Wx\|_1+\frac{\lambda_2}{2}\|x\|^2\\
				\textrm{subject to}\indent x\in\argmin \frac{1}{2}\|\mathbf{B}\cdot-\mathbf{b}\|^2.\\
			\end{array}%
		\end{eqnarray} 				
	Note that the problem (\ref{inpaint-pb}) fits into the setting of the problem (\ref{MP}) where  $m=1$, 	$f_1=\lambda_1\|W(\cdot)\|_1$, $h_1=\frac{\lambda_2}{2}\|\cdot\|^2$, and $g=\frac{1}{2}\|\mathbf{B}\cdot-\mathbf{b}\|^2$.
		To show the performance of the proposed method, we solve the image inpainting problem (\ref{inpaint-pb}) using Algorithm \ref{algorithm-ergodic-smooth}. 
Firstly, we test the method by presenting the ISNR values for different combinations of parameters  for reconstructing  the 384$\times$512 peppers image whose noisy image is obtained by  randomly masking 60\% of all pixels to black. The quality of the reconstructed images is measured by means of the improvement in signal-to-noise ratio (ISNR) in decibel (dB), that is,
		$$\mathrm{ISNR}(k)=10\log_{10}\left( \frac{\|x-\mathbf{b}\|^2}{\|x-x_k\|^2}\right),$$
		where $x, \mathbf{b}$, and $x_k$ denote the original clean image, the noisy image with missing pixels, and the reconstructed image at iteration $k$, respectively.    We run the algorithm for 20 iterations to obtain the ISNR value whose results are presented in Tables \ref{tb-isnr} and \ref{tb2-isnr}.
		\begin{table}[H]
			\centering
			\setlength{\tabcolsep}{6pt}
			{\scriptsize		\caption{\label{tb-isnr} ISNR values  after 20 iterations for different choices of penalization parameters $\lambda_1$ and $\lambda_2$  with the step size  $\alpha_k=1/k$ and the penalization parameter $\beta_k=k$.}
				\begin{tabular}{@{}l   r r  r r  r r  r r  r r  r r r r r r r r r@{}}
					\toprule  $\lambda_1$ $\rightarrow$  & \multirow{2}{*}{$0.1$} & \multirow{2}{*}{$0.2$} & \multirow{2}{*}{$0.3$} & \multirow{2}{*}{$0.4$} &  \multirow{2}{*}{$0.5$} & \multirow{2}{*}{$0.6$}  & \multirow{2}{*}{$0.7$}  & \multirow{2}{*}{$0.8$} & \multirow{2}{*}{$0.9$} & \multirow{2}{*}{$1$} & \multirow{2}{*}{$1.5$} &  \multirow{2}{*}{$2$} \\
					
					 $\lambda_2$  $\downarrow$  	  	 \\ \midrule
$10^{-8}$	&3.1701	&5.9704	&8.6229	&11.1305	&13.2470	&14.6711	&15.4206	&15.7265	&15.8401	&15.8574	&15.4991	&14.9656\\
$10^{-5}$	&3.1700	&5.9703	&8.6227	&11.1304	&13.2468	&14.6709	&15.4205	&15.7264	&15.8400	&15.8573	&15.4990	&14.9656\\
$10^{-4}$	&3.1696	&5.9695	&8.6215	&11.1288	&13.2452	&14.6696	&15.4195	&15.7258	&15.8395	&15.8569	&15.4987	&14.9653\\
$10^{-3}$	&3.1654	&5.9616	&8.6096	&11.1135	&13.2288	&14.6561	&15.4100	&15.7189	&15.8341	&15.8523	&15.4957	&14.9625\\
0.005	&3.1466   &5.9267	&8.5569	&11.0458	&13.1562	&14.5957	&15.3671	&15.6880	&15.8097	&15.8319	&15.4820	&14.9502\\
0.01	       &3.1233	&5.8835	&8.4916	&10.9615	&13.0654	&14.5192	&15.3122	&15.6484	&15.7786	&15.8058	&15.4646	&14.9346\\
0.05	       &2.9436	&5.5502	&7.9894	&10.3072	&12.3390	&13.8716	&14.8185	&15.2887	&15.4972	&15.5707	&15.3129	&14.8020\\
0.1	       &2.7353	&5.1635	&7.4109	&9.5430	&11.4549	&13.0110	&14.0903	&14.7313	&15.0632	&15.2124	&15.0952	&14.6188					\\ \bottomrule
				\end{tabular}
			}
		\end{table}
		
		In Table \ref{tb-isnr}, we list the values of the ISNR  after 20 iterations are performed by Algorithm \ref{algorithm-ergodic-smooth} for different choices of the penalization parameters $\lambda_1,\lambda_2> 0$. In this case, we put the step size sequence $\alpha_k=1/k$ and the penalization sequence $\beta_k=k$. We observe that  combining $\lambda_1=1$ with each parameter $\lambda_2\in(0,10^{-4}]$ leads to a large ISNR value of approximately 15.86 dB.	Moreover, one can see that  the ISNR value tends to increase when $\lambda_1\in[0.1,1]$ increases, while it tends to decrease when $\lambda_2$ increases.
		
			\begin{table}[H]
			\centering
			\setlength{\tabcolsep}{6pt}
			{\tiny	\caption{\label{tb2-isnr} ISNR values    after 20 iterations for different choices of  step size  $\alpha_k=a/k$ and penalization parameter $\beta_k=bk$ when $\lambda_1=1$ and $\lambda_2=10^{-4}$.}
				\begin{tabular}{@{}l   r r  r r  r r  r r  r r  r r r r r r r r r r r r r r r r r r@{}}
					\toprule	$a$ $\rightarrow$ & \multirow{2}{*}{$0.8$} & \multirow{2}{*}{$0.9$} &  \multirow{2}{*}{$1$} & \multirow{2}{*}{$1.1$}  & \multirow{2}{*}{$1.2$}  & \multirow{2}{*}{$1.3$} &  \multirow{2}{*}{$1.4$} & \multirow{2}{*}{$1.5$}  & \multirow{2}{*}{$1.6$} & \multirow{2}{*}{$1.7$} &  \multirow{2}{*}{$1.8$} & \multirow{2}{*}{$1.9$} & \multirow{2}{*}{$2$} \\ 
							 $b$  $\downarrow$  	  	 \\ \midrule
0.5		&12.0819	&12.9435	&13.5459	&13.9563	&14.2388	&14.4386	&14.5835	&14.6916	&14.7741	&14.8385	&14.8897	&14.9313	&14.9656\\
0.6		&13.2543	&13.9796	&14.4414	&14.7384	&14.9360	&15.0727	&15.1705	&15.2430	&15.2981	&15.3409	&15.3748	&15.4022	&15.4245\\
0.7		&14.0487	&14.6485	&15.0080	&15.2330	&15.3802	&15.4807	&15.5519	&15.6042	&15.6438	&15.6744	&15.6984	&15.7172	&15.7323\\
0.8		&14.6041	&15.1009	&15.3895	&15.5675	&15.6822	&15.7594	&15.8139	&15.8537	&15.8832	&15.9056	&15.9228	&15.9361	&15.9465\\
0.9		&14.9997	&15.4189	&15.6594	&15.8054	&15.8984	&15.9597	&16.0030	&16.0341	&16.0569	&16.0737	&16.0861	&16.0951	&16.1017\\
1		&15.2883	&15.6511	&15.8574	&15.9807	&16.0579	&16.1084	&16.1437	&16.1686	&16.1861	&16.1986	&16.2074	&16.2136	&-\\
1.1		&15.5002	&15.8237	&16.0060	&16.1133	&16.1792	&16.2221	&16.2516	&16.2719	&16.2860	&16.2956	&16.3019	&-	&-\\
1.2		&15.6600	&15.9543	&16.1182	&16.2135	&16.2715	&16.3090	&16.3345	&16.3518	&16.3636	&-	&-	&-	&-\\
1.3		&15.7859	&16.0569	&16.2063	&16.2922	&16.3439	&16.3775	&16.3998	&16.4147	&-	&-	&-	&-	&-\\
1.4	&15.8859	&16.1386	&16.2766	&16.3551	&16.4019	&16.4320	&16.4519	&-	&-	&-	&-	&-	&-\\
1.5		&15.9625	&16.2011	&16.3305	&16.4032	&16.4467	&16.4749	&-	&-	&-	&-	&-	&-	&-\\
1.6		&16.0223	&16.2499	&16.3724	&16.4406	&16.4819	&-	&-	&-	&-	&-	&-	&-	&-\\
1.7		&16.0697	&16.2885	&16.4062	&16.4713	&-	&-	&-	&-	&-	&-	&-	&-	&-\\
1.8		&16.1088	&16.3210	&16.4349	&16.4981	&-	&-	&-	&-	&-	&-	&-	&-	&-\\
1.9	&16.1418	&16.3481	&16.4595	&-	&-	&-	&-	&-	&-	&-	&-	&-	&-\\
2	&16.1690	&16.3710	&-	&-	&-	&-	&-	&-	&-	&-	&-	&-	&-		\\ \bottomrule
				\end{tabular}
			}
		\end{table}
		
		In Table \ref{tb2-isnr}, we present the values of the ISNR  after 20 iterations are performed by  Algorithm \ref{algorithm-ergodic-smooth} for different choices of   the positive square summable step size sequence $\alpha_k=a/k$  where $a\in[0.8,2]$, and  the positive penalization sequence $\beta_k=bk$, where $b\in[0.5,2]$. Note that the results for the combinations dissatisfying the condition (H3) are not presented in the table. 
		Observe that combining $\alpha_k=1.1/k$ with $\beta_k=1.8k$ leads to the largest ISNR value of 16.4981 dB. Furthermore, we notice that  the ISNR value tends to increase when  both $a$ and $b$ increase.   
		
		Next, we will compare the performance of  Algorithm \ref{algorithm-ergodic-smooth} when solving the inpainting problem in our setting (\ref{MP}) via the ISNR values with other well-known iterative methods, namely, proximal-gradient method (PGM) (see \cite[Theorem 25.8]{BC11})
		  and the fast iterative shrinkage-thresholding algorithm (FISTA) \cite{BT09}. These methods are suited for solving the classical setting (\ref{inpaint-pb-trad}) when putting $f=\lambda_1\|W(\cdot)\|_1$ and  $h=\frac{\lambda_2}{2}\|\cdot\|^2+\frac{1}{2}\|\mathbf{B}\cdot-\mathbf{b}\|^2$ with $\nabla h$ being $(\lambda_2+1)-$Lipschitz continuous. For fair comparison, we manually choose the best possible parameters combinations of each method (see Tables \ref{tb-isnr-fista} - \ref{tb-isnr-pgm-2} in Appendix B for several parameters combinations of PGM and FISTA) as follows:
		\begin{itemize}
			\item Algorithm \ref{algorithm-ergodic-smooth}: $\lambda_1=1, \lambda_2=10^{-4}, \alpha_k=1.1/k$, and $\beta_k=1.8k$;
			\item PGM \cite[Theorem 25.8]{BC11}:  $\lambda_1=0.1, \lambda_2=10^{-8}$, and $\gamma=1.9/(\lambda_2+1)$;
				\item FISTA \cite{BT09}:  $\lambda_1=0.05$ and  $\lambda_2=10^{-4}$.
			\end{itemize}
		We testify the methods by  the  relative change
		$$\max\left\{\frac{\|x_{x+1}-x_k\|}{\|x_k\|+1},\frac{|A(x_{k+1})-A(x_{k})|}{|A(x_{k})|+1},\frac{|B(x_{k+1})-B(x_{k})|}{|B(x_{k})|+1}\right\}\leq \epsilon,$$
		where  $A(\cdot):=\lambda_1\|W(\cdot)\|_1+\frac{\lambda_2}{2}\|\cdot\|^2$, $B(\cdot):=\frac{1}{2}\|\mathbf{B}\cdot-\mathbf{b}\|^2$, and $\epsilon$ is an optimality tolerance. We use the optimality tolerance $10^{-6}$ for obtaining the ISNR values. Moreover, we   show the curves of ISNR  for	the reconstructed images performed by these three methods after 50 iterations. We test the methods for three test images, and the results are shown in Figures \ref{PP} - \ref{LH}. 
		\begin{figure}[H]	
			\begin{minipage}[t]{0.22\textwidth}
				\centering
				\resizebox*{3.5cm}{!}{\includegraphics{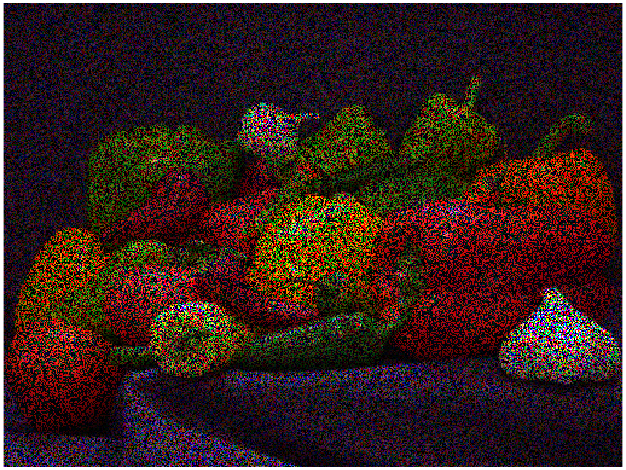}}\\				
				{\scriptsize (a) 60\% missing pixels \\$384\times 512$ peppers image}
			\end{minipage}	
			\begin{minipage}[t]{0.22\textwidth}
				\centering
				\resizebox*{3.5cm}{!}{\includegraphics{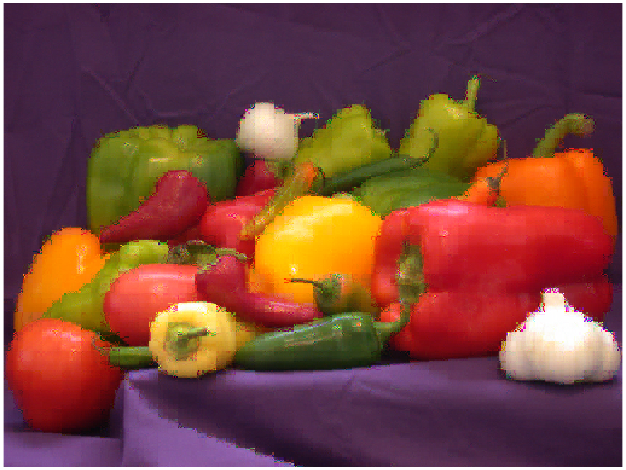}}\\				
				{\scriptsize	(b) Algorithm 1,  \\ISNR = 16.5561 dB}
			\end{minipage}			
				\begin{minipage}[t]{0.22\textwidth}
				\centering
				\resizebox*{3.5cm}{!}{\includegraphics{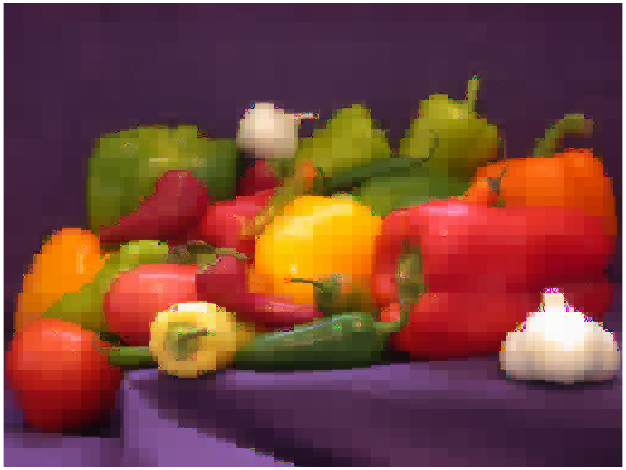}}\\			
				{\scriptsize	(c) FISTA, \\ISNR = 16.1556 dB}
			\end{minipage}			
				\begin{minipage}[t]{0.22\textwidth}
				\centering
				\resizebox*{3.5cm}{!}{\includegraphics{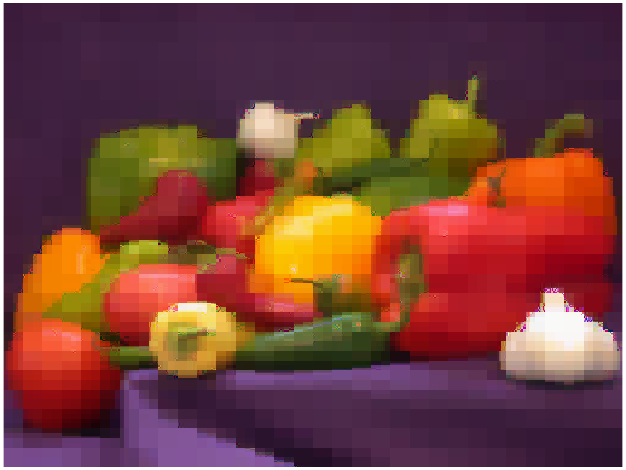}}\\				
				{\scriptsize	(d) PGM, \\ ISNR = 15.1692 dB}
			\end{minipage}			
	
		\vspace{0.3cm}
			\begin{minipage}[t]{0.4\textwidth}
				\centering
				\resizebox*{6cm}{!}{\includegraphics{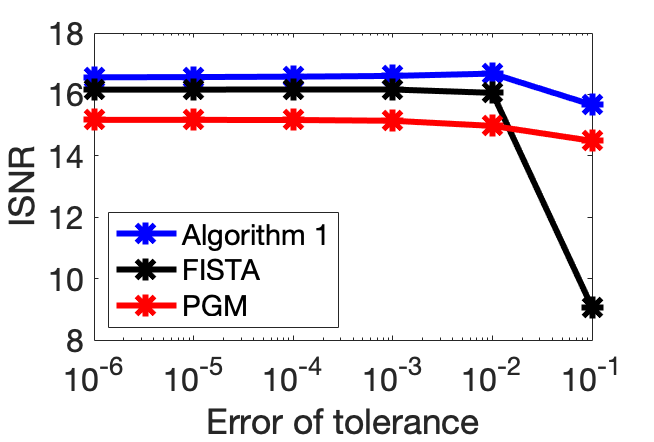}}\\				
				{\scriptsize	(e) ISNR values via error of tolerances}
					\end{minipage}					
					\begin{minipage}[t]{0.4\textwidth}
					\centering
					\resizebox*{6cm}{!}{\includegraphics{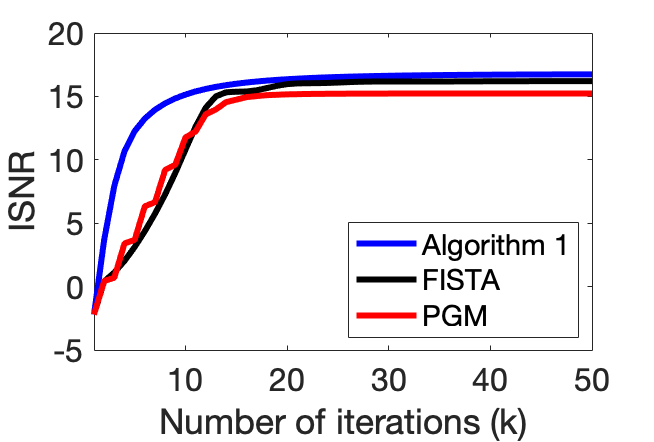}}\\
				
					{\scriptsize	(f) ISNR values via iterations}
					\end{minipage}	
		\caption{\label{PP} Image Inpainting. Figure (a) shows the  384$\times$512 peppers image with randomly masking 60\% of all pixels to black. Figures (b) - (d) show the reconstructed images  performed by Algorithm \ref{algorithm-ergodic-smooth}, FISTA, and PGM, respectively, for the optimality tolerance $10^{-6}$.  Figure (e) shows ISNR values when the iterates reach various optimality tolerances, and Figure (f) shows ISNR values when performing 50 iterations.}
\end{figure}

\begin{figure}[H]	
	\begin{minipage}[t]{0.2\textwidth}
		\centering
		\resizebox*{3cm}{!}{\includegraphics{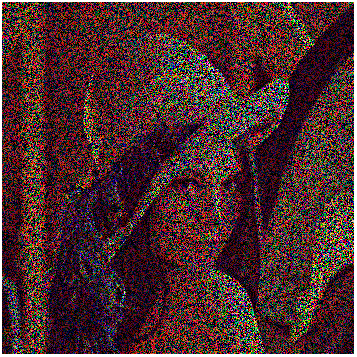}}\\				
		{\scriptsize (a) 60\% missing pixels \\$512\times 512$ Lena image}
	\end{minipage}	
	\begin{minipage}[t]{0.2\textwidth}
		\centering
		\resizebox*{3cm}{!}{\includegraphics{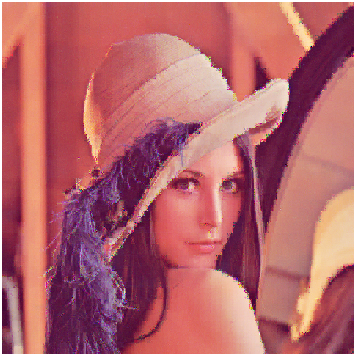}}\\				
		{\scriptsize	(b) Algorithm 1,  \\ISNR = 18.4371 dB}
	\end{minipage}			
	\begin{minipage}[t]{0.2\textwidth}
		\centering
		\resizebox*{3cm}{!}{\includegraphics{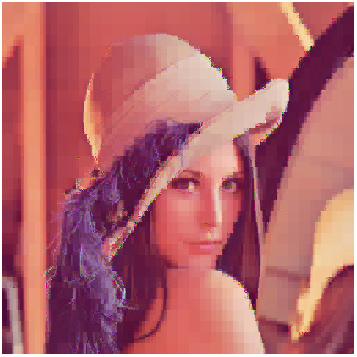}}\\			
		{\scriptsize	(c) FISTA, \\ISNR = 18.1711 dB}
	\end{minipage}			
	\begin{minipage}[t]{0.2\textwidth}
		\centering
		\resizebox*{3cm}{!}{\includegraphics{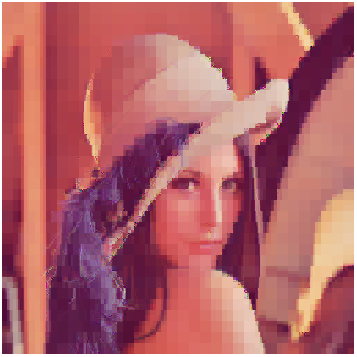}}\\				
		{\scriptsize	(d) PGM, \\ ISNR = 17.2952 dB}
	\end{minipage}			
	
	\vspace{0.3cm}
	\begin{minipage}[t]{0.4\textwidth}
		\centering
		\resizebox*{6cm}{!}{\includegraphics{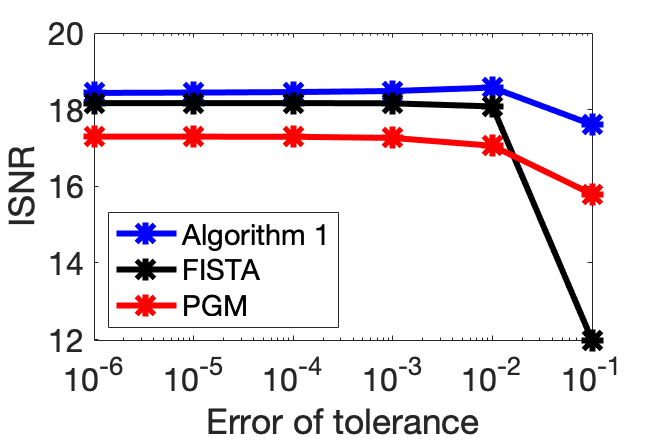}}\\				
		{\scriptsize	(e) ISNR values via error of tolerances}
	\end{minipage}					
	\begin{minipage}[t]{0.4\textwidth}
		\centering
		\resizebox*{6cm}{!}{\includegraphics{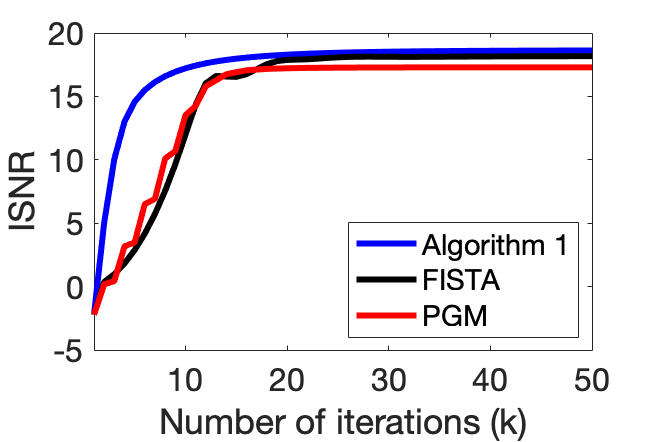}}\\
		
		{\scriptsize	(f) ISNR values via iterations}
	\end{minipage}	
		\caption{\label{LE}  Image Inpainting. Figure (a) shows the  512$\times$512 Lenna   image with randomly masking 60\% of all pixels to black. Figures (b) - (d) show the reconstructed images  performed by Algorithm \ref{algorithm-ergodic-smooth}, FISTA, and PGM, respectively, for the optimality tolerance $10^{-6}$.  Figure (e) shows ISNR values when the iterates reach various optimality tolerances, and Figure (f) shows ISNR values when performing 50 iterations.}
\end{figure}

\begin{figure}[H]	
	\begin{minipage}[t]{0.2\textwidth}
		\centering
		\resizebox*{3cm}{!}{\includegraphics{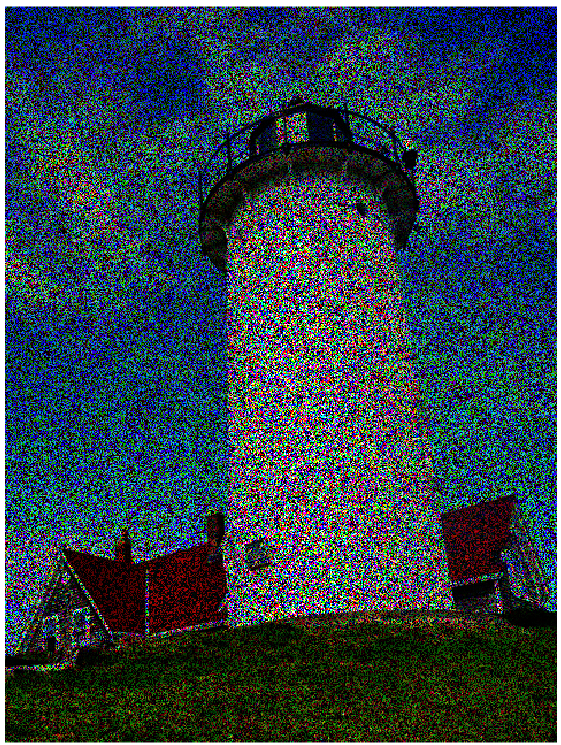}}\\				
		{\scriptsize (a) 60\% missing pixels \\$640\times 480$ lighthouse image}
	\end{minipage}	
	\begin{minipage}[t]{0.2\textwidth}
		\centering
		\resizebox*{3cm}{!}{\includegraphics{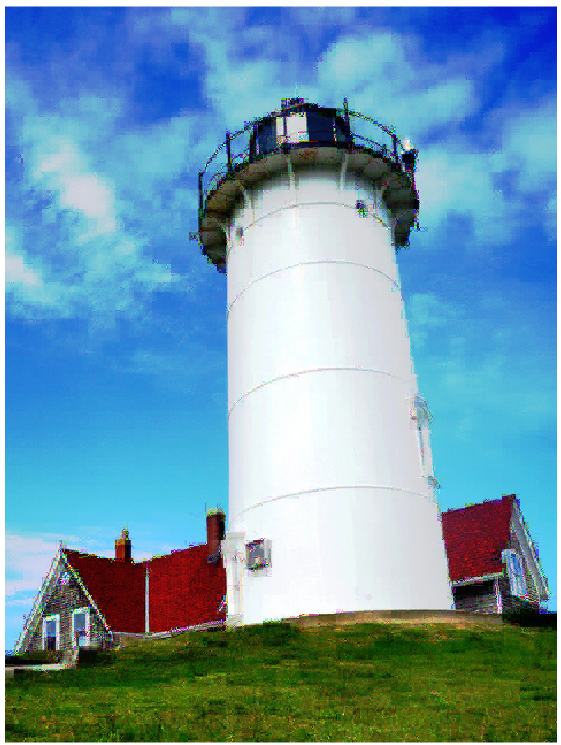}}\\				
		{\scriptsize	(b) Algorithm 1,  \\ISNR = 17.9481 dB}
	\end{minipage}			
	\begin{minipage}[t]{0.2\textwidth}
		\centering
		\resizebox*{3cm}{!}{\includegraphics{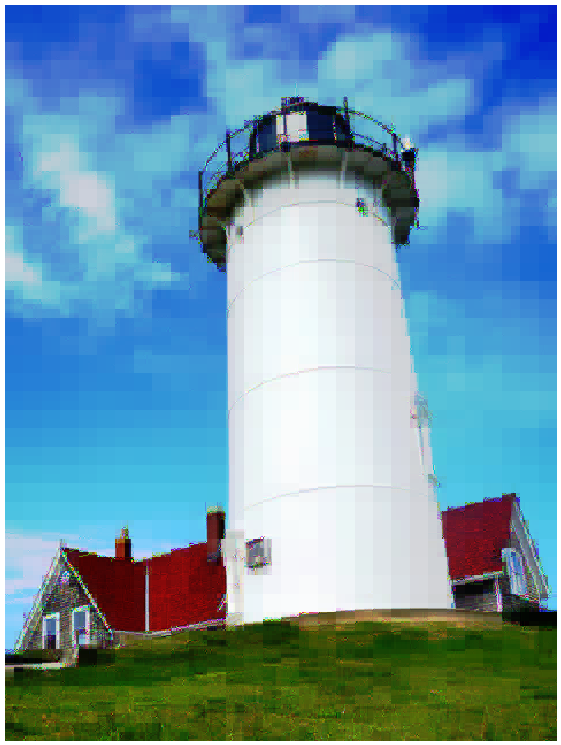}}\\			
		{\scriptsize	(c) FISTA, \\ISNR = 17.9159 dB}
	\end{minipage}			
	\begin{minipage}[t]{0.2\textwidth}
		\centering
		\resizebox*{3cm}{!}{\includegraphics{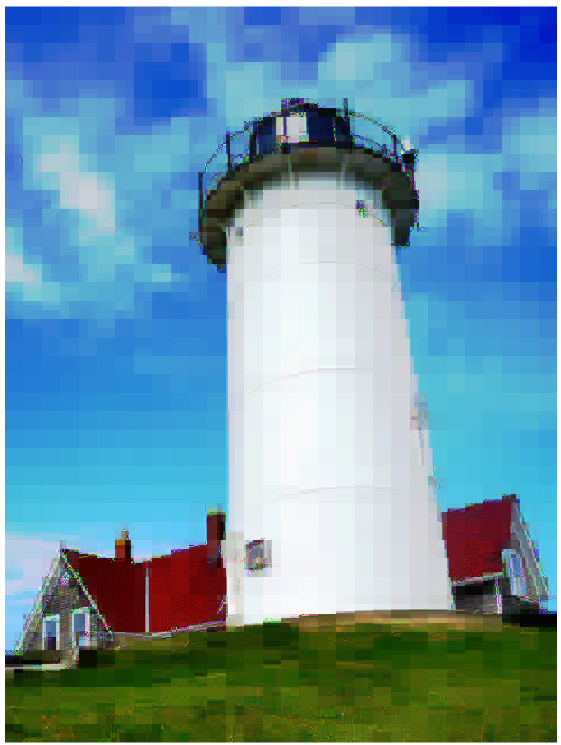}}\\				
		{\scriptsize	(d) PGM, \\ ISNR = 17.4031 dB}
	\end{minipage}			
	
	\vspace{0.3cm}
	\begin{minipage}[t]{0.4\textwidth}
		\centering
		\resizebox*{6cm}{!}{\includegraphics{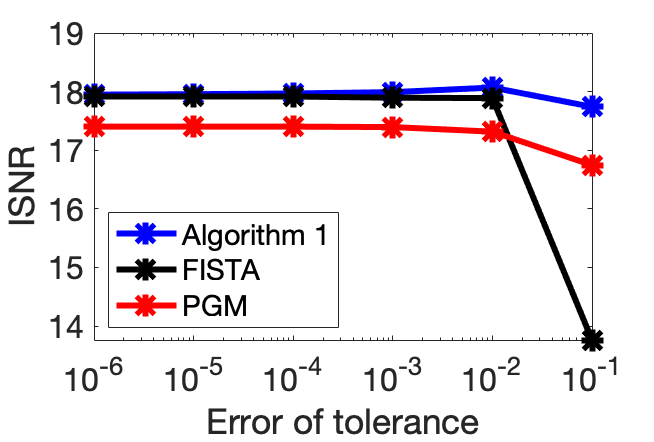}}\\				
		{\scriptsize	(e) ISNR values via error of tolerances}
	\end{minipage}					
	\begin{minipage}[t]{0.4\textwidth}
		\centering
		\resizebox*{6cm}{!}{\includegraphics{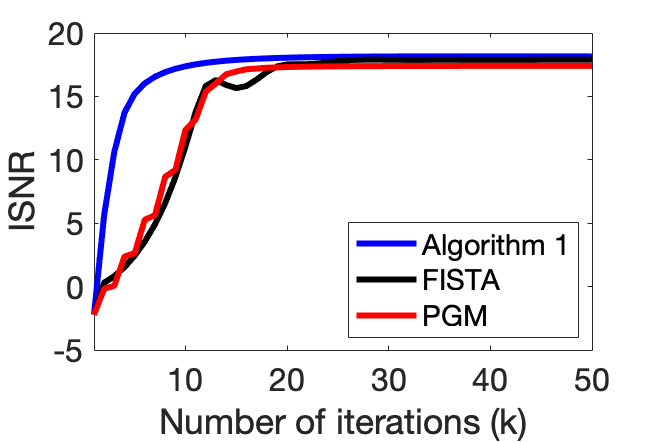}}\\
		
		{\scriptsize	(f) ISNR values via iterations}
	\end{minipage}	
			\caption{\label{LH} Image Inpainting. Figure (a) shows the 640$\times$480 lighthouse  image with randomly masking 60\% of all pixels to black. Figures (b) - (d) show the reconstructed images  performed by Algorithm \ref{algorithm-ergodic-smooth}, FISTA, and PGM, respectively, for the optimality tolerance $10^{-6}$.  Figure (e) shows ISNR values when the iterates reach various optimality tolerances, and Figure (f) shows ISNR values when performing 50 iterations.}
		\end{figure}

		 From all the above results, we observe that the proposed method (Algorithm \ref{algorithm-ergodic-smooth}) outperforms PGM and FISTA in the terms of the improvement in signal-to-noise ratio (ISNR) in both the error of tolerance criteria and stopping criteria with a fixed number of iterations, which may benefit from the usefulness of hierarchical setting (\ref{inpaint-pb}) considered in this work. 
	
        
		\subsection{Generalized Heron Problem}
		
		The traditional Heron problem is to find  a point on a straight line in a plane in which it minimizes the sum of distances from it to two given
points. Several generalizations of the classical Heron problem of finding a point that minimizes the sum of the distances to given closed convex sets over a nonempty simple closed convex set have been investigated by many authors, for instance, \cite{BCH15,MNS12,MNS12-2}.  However, it is very challenging to solve the generalized Heron problem when the constrained set is an affine subspace $\mathbf{A}x=\mathbf{b}$ (a solution set to a system of linear equations), which typically has no solution and thus computing a metric projection onto this feasible set is impossible. In addition, any methods mentioned in the above references cannot be applied in such case. This motivates us to consider the generalized Heron problem of finding a point that minimizes the sum of the distances to given closed convex sets over a least squares solution to a system of linear equation, that is,
		\begin{eqnarray}\label{heron}
			\begin{array}{ll}
				\textrm{minimize}\indent \sum_{i=1}^m\dist(x,C_i)+\frac{1}{2}\|x\|^2\\
				\textrm{subject to}\indent x\in\argmin \frac{1}{2}\|\mathbf{A}x-\mathbf{b}\|^2,
			\end{array}%
		\end{eqnarray}
		where $C_i \subset \mathbb{R}^n$ are nonempty closed convex subsets, for all $i=1,...,m$, $\mathbf{A}\in \mathbb{R}^{r\times n}$ is a matrix, and $\mathbf{b}\in \mathbb{R}^{r}$ is a vector. 
			We observe that (\ref{heron})   fits into the setting of the problem (\ref{MP}) when setting $f_i(x)=\dist(x,C_i)$,  $h_i(x)=\frac{1}{2m}\|x\|^2$, for all $i=1,...,m$, and $g(x)=\frac{1}{2}\|\mathbf{A}x-\mathbf{b}\|^2$ for all $x\in\R^n$.  In this case, a solution set of a system of linear equations can be empty. Moreover, it is worth noting that when performing our proposed method (Algorithm \ref{algorithm-ergodic-smooth}), we only require to compute the gradient of $g$ but not the inverse of any matrix. Note also that,  the square of $\ell_2$-norm is to guarantee the uniqueness of a solution to the problem.

		We will perform our experiments by considering the closed convex target sets $C_i \subset \mathbb{R}^n$, for all $i=1,\ldots,m$, which are balls of radius $0.2$ whose centers are created randomly in the interval $(-n^2,n^2)$. We put $r=m^2$ and generate all arrays of the matrix $\mathbf{A}$  randomly from the interval $(-n^2,n^2)$.  Our experiments will be divided into two cases, namely, the case of consistent constraint  where $\mathbf{b}=\mathbf{0}_{\mathbb{R}^{m^2}}$, and  the case of inconsistent constraint where $\mathbf{b}$ is not a zero vector.  In all experiments, we measure the performance of Algorithm \ref{algorithm-ergodic-smooth} by the relative change between two consecutive iterations, i.e.,
		$$\max\left\{\frac{\|x_{x+1}-x_k\|}{\|x_k\|+1},\frac{|F(x_k)-F(x_{k-1})|}{|F(x_{k-1})|+1},\frac{|g(x_k)-g(x_{k-1})|}{|g(x_{k-1})|+1}\right\}\leq \epsilon,$$
		where $F:=\sum_{i=1}^mF_i$ and $F_i:=f_i+h_i$ for all $i=1,...,m$. The runtimes of Algorithm \ref{algorithm-ergodic-smooth}  are clocked in seconds. Two  examples of an iteration $x_k$ generated by Algorithm \ref{algorithm-ergodic-smooth}  are illustrated in Figure \ref{cons}.

		\begin{figure}[H]
		\begin{minipage}[c]{0.5\textwidth}
			\centering
			\resizebox*{4.5cm}{!}{\includegraphics{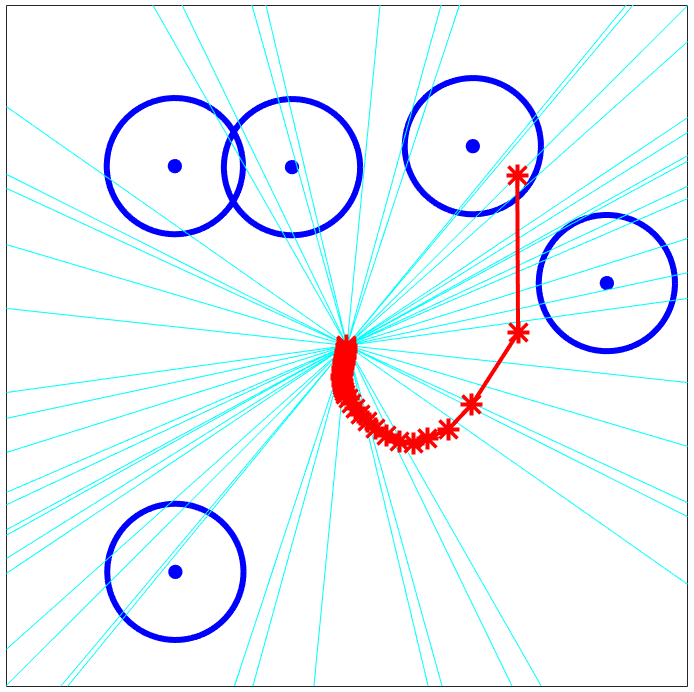}}	\\
			{\scriptsize	(a) consistent constraint}		
				\end{minipage}
    	\begin{minipage}[c]{0.4\textwidth}
		\centering
		\resizebox*{4.5cm}{!}{\includegraphics{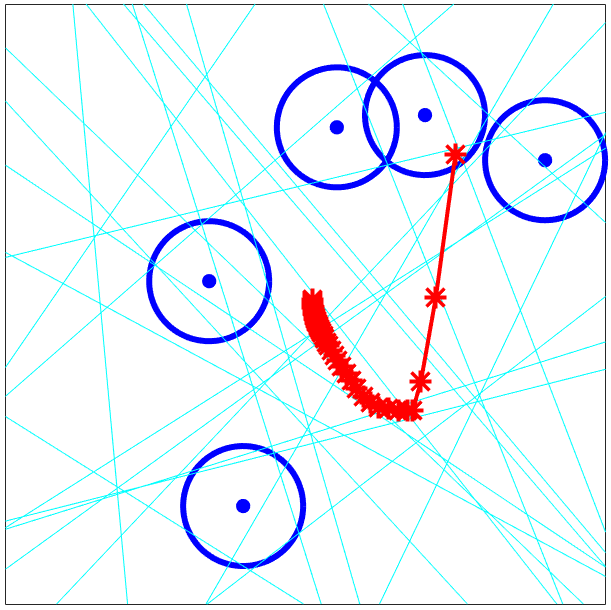}}	\\
	     	{\scriptsize	(b) inconsistent constraint}			     
        \end{minipage}
		\caption{\label{cons}Behavior of $x_k$ for several optimal tolerances $\epsilon$ on  the generalized Heron problems with consistent constraint (Figure (a)) and inconsistent constraint (Figure (b)).}
	\end{figure}

Next, we will consider the influences of corresponding parameters $\alpha_k$ and $\beta_k$ in Algorithm \ref{algorithm-ergodic-smooth} on  the generalized Heron problem with the consistent constraint, that is, $\mathbf{b}=\mathbf{0}_{\mathbb{R}^{m^2}}$. We use the number of target sets $m=5$, the dimension $n=2$, and perform $10$ samplings for the different randomly chosen matrix $\mathbf{A}\in \mathbb{R}^{25\times 5}$, the balls $C_1,\ldots,C_5\subset\mathbb{R}^2$, and the starting point $x_1\in\mathbb{R}^2$. We terminate Algorithm \ref{algorithm-ergodic-smooth} when the relative changes are less than $10^{-6}$, and then compute the averages of runtimes,  whose results are  shown in Table \ref{tb-cons}.	
		\begin{table}[H]
		\centering
		\setlength{\tabcolsep}{6pt}
		{\scriptsize		\caption{\label{tb-cons} Comparison of  runtime   for  different choices of step size  $\alpha_k = a/k$ and penalization parameter $\beta_k=bk/\|\mathbf{A}\|^2$ in  the case of consistent constraint.}
			\begin{tabular}{@{}l   r r  r r  r r  r r  r r  r r r r r r r r r@{}}
				\toprule  $a$ $\rightarrow$  & \multirow{2}{*}{$0.1$} & \multirow{2}{*}{$0.2$} & \multirow{2}{*}{$0.3$} & \multirow{2}{*}{$0.4$} &  \multirow{2}{*}{$0.5$} & \multirow{2}{*}{$0.6$}  & \multirow{2}{*}{$0.7$}  & \multirow{2}{*}{$0.8$} & \multirow{2}{*}{$0.9$} & \multirow{2}{*}{$1$} \\
				
				$b$  $\downarrow$  	  	 \\ \midrule
		0.1&	0.1152&	0.0988&	0.0977&	0.0800&	0.0967&	0.0732&	0.0848&	0.0756&	0.0830&	0.0813\\
		0.2&	0.0637&	0.0663&	0.0634&	0.0545&	0.0562&	0.0648&	0.0552&	0.0505&	0.0575&	0.0633\\
		0.3&    0.0524&	0.0465&	0.0461&	0.0483&	0.0609&	0.0528&	0.0427&	0.0481&	0.0485&	0.0542\\
		0.4&	0.0503&	0.0430&	0.0458&	0.0485&	0.0454&	0.0500&	0.0454&	0.0487&	0.0361&	0.0392\\
		0.5&	0.0413&	0.0393&	0.0441&	0.0409&	0.0436&	0.0383&	0.0371&	0.0385&	0.0354&	0.0437\\
		0.6&	0.0317&	0.0360&	0.0409&	0.0386&	0.0342&	0.0403&	0.0379&	0.0358&	0.0332&	0.0301\\
		0.7&	0.0337&	0.0330&	0.0338&	0.0352&	0.0345&	0.0280&	0.0333&	0.0330&	0.0366&	0.0358\\
		0.8&	0.0307&	0.0290&	0.0285&	0.0296&	0.0352&	0.0327&	0.0350&	0.0332&	0.0301&	0.0296\\
		0.9&	0.0347&	0.0308&	0.0293&	0.0330&	0.0324&	0.0364&	0.0297&	0.0300&	0.0297&	0.0297\\
		1.0&	0.0293&	0.0283&	0.0322&	0.0278&	0.0315&	0.0306&	0.0337&	0.0292&	0.0283&	0.0337\\
		1.1&	0.0298&	0.0254&	0.0301&	0.0290&	0.0354&	0.0314&	0.0270&	0.0264&	0.0252&	0.0287\\
		1.2&	0.0284&	0.0301&	0.0296&	0.0263&	0.0276&	0.0259&	0.0299&	0.0223&	0.0281&	0.0303\\
		1.3&	0.0238&	0.0267&	0.0259&	0.0286&	0.0292&	0.0244&	0.0263&	0.0260&	0.0268&	0.0302\\
		1.4&	0.0270&	0.0261&	0.0246&	0.0248&	0.0404&	0.0220&	0.0292&	0.0280&	0.0244&	0.0291\\
		1.5&	0.0252&	0.0279&	0.0261&	0.0233&	0.0329&	0.0244&	0.0253&	0.0270&	0.0268&	0.0280\\
		1.6&	0.0264&	0.0239&	0.0242&	0.0261&	0.0264&	0.0242&	0.0210&	0.0233&	0.0246&	0.0279\\
		1.7&	0.0209&	0.0244&	0.0242&	0.0247&	0.0225&	0.0215&	0.0227&	0.0260&	0.0225&	0.0229\\
		1.8&	0.0233&	0.0229&	0.0281&	0.0217&	0.0216&	0.0222&	0.0220&	0.0240&	0.0217&	0.0262\\
		1.9&	0.0242&	0.0202&	0.0275&	0.0235&	0.0255&	{\bf 0.0198}&	0.0227&	0.0221&	0.0238&	0.0257	\\ \bottomrule
			\end{tabular}
		}
	\end{table}
	
	In Table \ref{tb-cons},  we present the influences of  the positive square summable step size $\alpha_k=a/k$, where $a\in[0.1,1]$, and  the positive penalization parameter $\beta_k=bk/\|\mathbf{A}\|^2$, where $b\in[0.1,1.9]$. We can notice that almost all  computational runtimes  are between 0.02 and 0.04 seconds.  Moreover, we observe that for each choice of $\alpha_k$, the larger penalization parameter $\beta_k$ gives a better result, and in this experiment, the best result is obtained from the combination of $\alpha_k=0.6/k$ and $\beta_k=1.9k/\|\mathbf{A}\|^2$.
	
	In Table \ref{error--tb}, we show the averaged computational runtime and the average number of iterations  of 10 sampling for  several numbers of target sets $m$ and several dimensions $n$. For the sake of completeness, we also present the average value of norm $\|\mathbf{A}\|$.  We terminate Algorithm \ref{algorithm-ergodic-smooth} by the optimality tolerance $\epsilon=10^{-6}$. 
	
		\begin{table}[H]
		\centering
		\setlength{\tabcolsep}{12pt}
		\caption{\label{error--tb}Behavior of Algorithm \ref{algorithm-ergodic-smooth} on  the generalized Heron problems with consistent constraints.}
		{\scriptsize \begin{tabular}{@{}l   l  r  r r  r r  r r  r r @{}}
				\hline\noalign{\smallskip}
				$m$   	   &        $n$  	&  $\|\mathbf{A}\|$  	& Time  	 & \#(Iters)    			 \\ \midrule
				5		&2		&12.7429		&0.0292	&1210\\
			             &3		 &30.4313		&0.0331	&1367\\
			             &5		&92.6700	&0.0642	&2925\\
			             &10    &423.3893	&0.2367	&8822\\ 
			             &20	&1978.0400	&1.4004	&39735\\
			             &50 	&16519.4145&	26.9685	&444919\\ \midrule
			    10		& 2    &24.2202		&0.0485	&1305\\        
			             &3		&55.5478	&0.0879	   &2252\\
				         &5		&167.3363	&0.1785	&4097\\
				         &10	&721.7128	&0.8301	&13761\\
				         &20	&3269.1938	&4.7381	&42965\\
				         &50	&24146.8185&	40.2545	&196313\\ \midrule
				 20		&2     &47.3754		&0.1301	&2078\\       
				         &3		&108.9700	&0.2621	&3831\\
				         &5		&308.9152	&0.6069	&8240\\
				         &10	&1299.6429	&2.0165	&22944\\   
				         &20	&5502.7016	&8.7097	&62866\\
				       &50	&38131.4998&	94.9971	&239289\\ \midrule
				       50		&2     &116.4996		&0.7457	&4857\\        
				         &3		&264.7944	&1.4213	&8629\\
				         &5		&743.7431	&4.2728	&19139\\
			         	 &10	&3037.0681	&14.8371&	51719\\
			         	 &20	&12424.2950	&64.2897&	127730\\
			         	 &50	&81465.8917&	450.8698&	439601\\ \midrule
				100    &2     &232.7786		&3.4001	&8879 	 \\ 
						&3    	&524.9095	&7.2561	&18363 	 \\ 
			             &5	    &1466.0786	&17.5919	&37689 	 \\ 
				         &10	&5907.6284	&60.5538&	100652  	 \\ 
				         &20	&24024.5749	&322.0257&	251252\\ 
			             &50 & 153858.3795	&2918.1447	&741560 \\
				\hline\noalign{\smallskip}
			\end{tabular}
		}
	\end{table}

		As shown in Table \ref{error--tb}, for the same number of target sets $m$, we observe that the higher
		dimension needs a longer time and a higher number of iterations. In a similar direction, we  notice that for the same dimension $n$, the larger number of target sets takes considerably more computational time than the smaller one. In particular, we also observe that the computational time for the case when $m=100$ is almost a hundred times of that of the smaller case $m=10$.

		In the next experiments, we consider the generalized Heron problem with inconsistent constraint, that is, $\mathbf{b}$ is not a zero vector $\mathbf{0}_{\mathbb{R}^{m^2}}$.   We also use the number of target sets $m=5$, the dimension $n=2$, and perform $10$ samplings for the different randomly chosen matrix $\mathbf{A}\in \mathbb{R}^{25\times 5}$, the balls $C_1,\ldots, C_5\subset\mathbb{R}^2$, the starting point $x_1\in\mathbb{R}^2$, and the vector $\mathbf{b}$ in the interval $(0,1)$. The influences of corresponding parameters $\alpha_k$ and $\beta_k$ in Algorithm \ref{algorithm-ergodic-smooth} are obtained when Algorithm \ref{algorithm-ergodic-smooth} is terminated by the optimal tolerance $10^{-6}$ and its runtimes, which are shown in Table  \ref{tb-incons} are averaged. 
		\begin{table}[H]
			\centering
			\setlength{\tabcolsep}{6pt}
			{\scriptsize		\caption{\label{tb-incons} Comparison of  algorithm runtime   for  different choices of step sizes  $\alpha_k = a/k$ and penalization parameter $\beta_k=bk/\|\mathbf{A}\|^2$ in  the case of inconsistent constraint.}
				\begin{tabular}{@{}l   r r  r r  r r  r r  r r  r r r r r r r r r@{}}
					\toprule  $a$ $\rightarrow$  & \multirow{2}{*}{$0.1$} & \multirow{2}{*}{$0.2$} & \multirow{2}{*}{$0.3$} & \multirow{2}{*}{$0.4$} &  \multirow{2}{*}{$0.5$} & \multirow{2}{*}{$0.6$}  & \multirow{2}{*}{$0.7$}  & \multirow{2}{*}{$0.8$} & \multirow{2}{*}{$0.9$} & \multirow{2}{*}{$1$} \\
					
					$b$  $\downarrow$  	  	 \\ \midrule
					0.1&	0.0975&	0.0953&	0.0861&	0.0863&	0.0725&	0.0862&	0.0692&	0.0908&	0.0906&	0.0759\\
					0.2&	0.0632&	0.0558&	0.0564&	0.0614&	0.0463&	0.0523&	0.0616&	0.0567&	0.0500&	0.0588\\
					0.3&	0.0510&	0.0563&	0.0492&	0.0570&	0.0422&	0.0493&	0.0389&	0.0582&	0.0455&	0.0444\\
					0.4&	0.0417&	0.0393&	0.0415&	0.0395&	0.0463&	0.0443&	0.0464&	0.0478&	0.0403&	0.0379\\
					0.5&	0.0392&	0.0410&	0.0314&	0.0415&	0.0403&	0.0446&	0.0395&	0.0388&	0.0394&	0.0510\\
					0.6&	0.0381&	0.0373&	0.0395&	0.0311&	0.0383&	0.0315&	0.0355&	0.0361&	0.0369&	0.0307\\
					0.7&	0.0368&	0.0343&	0.0313&	0.0339&	0.0361&	0.0313&	0.0346&	0.0341&	0.0286&	0.0390\\
					0.8&	0.0353&	0.0293&	0.0304&	0.0318&	0.0300&	0.0326&	0.0306&	0.0331&	0.0318&	0.0267\\
					0.9&	0.0296&	0.0280&	0.0298&	0.0319&	0.0306&	0.0279&	0.0320&	0.0274&	0.0275&	0.0287\\
					1.0&	0.0282&	0.0276&	0.0305&	0.0282&	0.0323&	0.0292&	0.0302&	0.0303&	0.0270&	0.0254\\
					1.1&	0.0269&	0.0273&	0.0320&	0.0277&	0.0271&	0.0252&	0.0344&	0.0257&	0.0264&	0.0297\\
					1.2&	0.0284&	0.0203&	0.0278&	0.0246&	0.0199&	0.0274&	0.0272&	0.0219&	0.0273&	0.0237\\
					1.3&	0.0242&	0.0233&	0.0275&	0.0283&	0.0267&	0.0249&	0.0262&	0.0273&	0.0267&	0.0252\\
					1.4&	0.0294&	0.0257&	0.0215&	0.0251&	0.0237&	0.0214&	0.0299&	0.0240&	0.0241&	0.0233\\
					1.5&	0.0245&	0.0275&	0.0227&	0.0250&	0.0254&	0.0228&	0.0248&	0.0228&	0.0192&	0.0252\\
					1.6&	0.0241&	0.0234&	0.0225&	0.0191&	0.0235&	0.0215&	0.0254&	0.0210&	0.0248&	0.0213\\
					1.7&	0.0233&	0.0248&	0.0220&	0.0261&	0.0199&	0.0211&	0.0235&	0.0219&	0.0208&	0.0233\\
					1.8&	0.0224&	0.0214&	0.0227&	0.0200&	0.0207&	0.0255&	0.0238&	0.0210&	0.0178&	{\bf 0.0175}\\
					1.9&	0.0251&	0.0210&	0.0201&	0.0194&	0.0212&	0.0234&	0.0254&	0.0205&	0.0224&	0.0188	\\ \bottomrule
				\end{tabular}
			}
		\end{table}
	
		We present in Table \ref{tb-incons} the influences of  the positive step size $\alpha_k=a/k$, where $a\in[0.1,1]$, and  the penalization parameter $\beta_k=bk/\|\mathbf{A}\|^2$, where $b\in[0.02,0.04]$. In the same manner with the consistent case, we can notice that  for all choices of step size $\alpha_k$,  a smaller penalization parameter $\beta_k$ considerably requires more computational runtime, and in this experiment, the best result is obtained from the combination of $\alpha_k=1/k$ and $\beta_k=1.8k/\|\mathbf{A}\|^2$.

			In a similar fashion as the consistent case,  we show the average computational runtime and the average number of iterations  of 10 sampling for  several numbers of target sets $m$ and several dimensions $n$ for the inconsistent case in Table \ref{error--tb-incons}. In this experiment, we also terminate Algorithm \ref{algorithm-ergodic-smooth} by the optimality tolerance $\epsilon=10^{-6}$. 
		\begin{table}[H]
			\centering
			\setlength{\tabcolsep}{12pt}
			\caption{\label{error--tb-incons}Behavior of Algorithm \ref{algorithm-ergodic-smooth} on   generalized Heron problems with inconsistent constraint.}
		{\scriptsize \begin{tabular}{@{}l   l  r  r r  r r  r r  r r @{}}
					\hline\noalign{\smallskip}
					$n$   	   &        $m$  	&  $\|\mathbf{A}\|$  	& Time  	 & \#(Iters)    			 \\ \midrule
			5	&2	&12.4861	&0.0295	&1112\\
			     &3	&30.5263	&0.0323	&1333\\
			     &5	&98.5510	 &0.0402	&1605\\
			 	&10	&430.1475	&0.1857	&6707\\
				&20	&1989.9481	&1.3728	&37874\\
				&50	&16358.4264	&27.6430	&446870\\\midrule
		   10	&2	&24.5911	&0.0486	&1319\\
				&3 &55.6295	&0.0556	&1379\\			
				&5	&163.4317	&0.0792	&1759\\
				&10	&725.6254	&0.3125	&4874\\
				&20	&3173.0835	&1.8285	&17425\\
				&50	&23579.2386	&19.7460	&94644\\ \midrule
			20 &2	&46.6140	&0.0770	&1197\\		
				&3	&107.6329	&0.1053	&1567\\
				&5  &309.4062	&0.1414	&1918\\
				&10	&1304.4604	&0.4817	&5267\\
				&20	&5521.4899	&1.9743	&14588\\
				&50	&38699.0252	&26.7447	&67902\\ \midrule
			50&2 	&116.5993	&0.3056	&2017\\
				&3	&263.8920	&0.2797	&1713\\
				&5	&742.5804	&0.6016	&2612\\
				&10	&3024.8978	&2.1776	&7343\\
				&20	&12455.7174	&8.2004	&16805\\
				&50	&81713.0491	&65.7634	&64529\\ \midrule
			100	&2 &232.2942	&0.8234	&2064\\ 
			   &3&524.2420	&0.9983	&2367\\  
				&5&1466.3736	&1.5053	&2991\\ 
               &10&5918.4690	&5.5408	&8583\\ 
               &20&23967.7064	&24.1894&21119\\  
               &50&	153872.0238&	251.9113	&74525\\
					\hline\noalign{\smallskip}
				\end{tabular}
			}
		\end{table}
	
		In Table \ref{error--tb-incons}, we notice that for the same number of target sets $m$,  almost all cases of higher dimension $m$ need a longer  runtime and a higher number of iterations. 	This is also similar manner in the context of the number of iterations. Moreover, we notice that for the same number of target  sets and dimension, the generalized Heron problem with inconsistent constraint requires less iterations and computational runtime compared with the consistent case.



		
			\section{Conclusions}
			We consider the splitting method called  the incremental proximal gradient method with a penalty term for solving a minimization problem of the sum of a finite number of convex functions subject to the set of minimizers of a convex differentiable  function. The advantage of our method is that it allows us not only to compute the proximal operator or the gradient of each function separately but also to consider a general sense of the constrained set. Under some suitable assumptions, we show the convergence of iterates to an optimal solution. Finally, we propose some numerical experiments on  the image inpainting problem and the generalized Heron problems.

			
			\section*{Acknowledgement}
				The authors are thankful to two anonymous referees and the Associate Editor for comments
				and remarks which improved the quality and presentation  of the paper. The authors are also thankful to Professor Radu Ioan Bo\c{t} for his suggestion on image inpainting problem. This work is  supported by the Thailand Research Fund under the Project RAP61K0012.

			
			\section*{Appendix}
			\subsection*{A. Key Tool Lemmas}\label{proof}
		This subsection is dedicated to the proofs of the important lemmas relating to the sequence generated by Algorithm \ref{algorithm-ergodic-smooth}.
		
		\begin{lemma}\label{IPGP-lemma-12}
			Let $u\in\mathcal{S}$ and $p\in N_{\arg\min g}(u)$ be such that $0=p+\sum_{i=1}^mv_i+\sum_{i=1}^m\nabla h_i(u)$, where $v_i\in\partial f_i(u)$ for all $i=1,\ldots,m$. Then  for every $k\geq1$ and $\eta>0$, we have
			\begin{eqnarray*}
				&&\|x_{k+1}-u\|^2-\|x_{k}-u\|^2+\frac{\eta}{1+\eta}\alpha_k\beta_k g(x_{k})+\left(1-\frac{\eta}{1+\eta}\right)\sum_{i=1}^m\|\varphi_{i+1,k}-\varphi_{i,k}\|^2\\
				&\leq&\alpha_k\left(\frac{2(1+\eta)}{\eta}\alpha_k-\frac{2}{\max_{1\leq i\leq m}L_i}\right)\sum_{i=1}^m\|\nabla h_i(\varphi_{i,k})-\nabla h_i(u)\|^2\\
				&&+\left(\left(1+\frac{\eta}{2(1+\eta)}\right)\alpha_k\beta_k-\frac{2}{L_g(1+\eta)}\right)\alpha_k\beta_k\|\nabla g(x_k)\|^2\\
				&&+\frac{2m(m+1)(1+\eta)}{\eta}\alpha_k^2\sum_{i=1}^m\|\nabla h_i(u)+v_i\|^2\\
				&&+\frac{\eta}{1+\eta}\alpha_k\beta_k\left[g^*\left(\frac{2p}{\frac{\eta}{1+\eta}\beta_k}\right)-\sigma_{\arg\min g}\left(\frac{2p}{\frac{\eta}{1+\eta}\beta_k}\right)\right].
			\end{eqnarray*}
		\end{lemma}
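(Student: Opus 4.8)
The plan is to follow the skeleton of the proof of Theorem~\ref{thm-main-2}, but to keep every term that was discarded there (the displacements $\|\varphi_{i+1,k}-\varphi_{i,k}\|^2$, the penalty value $g(x_k)$, and the cocoercivity surplus) and to feed in the reference data $v_i\in\partial f_i(u)$, $\nabla h_i(u)$ and $p\in N_{\arg\min g}(u)$ where before only a generic $u\in\mathcal{S}$ was used. First I would read off from the prox step that, for each $i=1,\dots,m$,
$$\frac{1}{\alpha_k}\left(\varphi_{i,k}-\alpha_k\nabla h_i(\varphi_{i,k})-\varphi_{i+1,k}\right)\in\partial f_i(\varphi_{i+1,k}),$$
and combine this with the cosine identity
$$\|\varphi_{i+1,k}-u\|^2-\|\varphi_{i,k}-u\|^2+\|\varphi_{i+1,k}-\varphi_{i,k}\|^2=2\<\varphi_{i,k}-\varphi_{i+1,k},u-\varphi_{i+1,k}\>.$$
The key departure from the theorem is that, rather than bounding $\<\nabla h_i(\varphi_{i,k}),u-\varphi_{i+1,k}\>$ by convexity alone, I would insert the reference data: monotonicity of $\partial f_i$ gives $\<w_i-v_i,\varphi_{i+1,k}-u\>\ge0$ for $w_i\in\partial f_i(\varphi_{i+1,k})$, while the Baillon--Haddad cocoercivity of $\nabla h_i$ (with constant $1/L_i\ge 1/\max_{1\le i\le m}L_i$, the uniform bound letting the coefficient be pulled out of the sum) controls $\<\nabla h_i(\varphi_{i,k})-\nabla h_i(u),\varphi_{i,k}-u\>$ from below.

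Next I would apply Young's inequality with the free weight $\eta$ to every cross term. Writing $u-\varphi_{i+1,k}=(u-\varphi_{i,k})+(\varphi_{i,k}-\varphi_{i+1,k})$ and using $2\<a,b\>\le\tfrac{1+\eta}{\eta}\|a\|^2+\tfrac{\eta}{1+\eta}\|b\|^2$ on the mixed products is exactly what produces both the cocoercivity coefficient $\tfrac{2(1+\eta)}{\eta}\alpha_k-\tfrac{2}{\max_{1\le i\le m}L_i}$ in front of $\sum_i\|\nabla h_i(\varphi_{i,k})-\nabla h_i(u)\|^2$ and the retained factor $\bigl(1-\tfrac{\eta}{1+\eta}\bigr)$ on $\sum_i\|\varphi_{i+1,k}-\varphi_{i,k}\|^2$ on the left. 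The combinatorial weight $\tfrac{2m(m+1)(1+\eta)}{\eta}$ on $\sum_i\|\nabla h_i(u)+v_i\|^2$ arises as in the theorem: transferring each $\varphi_{i,k}$ to the common base point forces $\|\varphi_{i,k}-\varphi_{1,k}\|\le\sum_{j<i}\|\varphi_{j+1,k}-\varphi_{j,k}\|$, and summing $1+2+\dots+m$ gives the $m(m+1)$. For the $g$-block I would expand $\|\varphi_{1,k}-u\|^2$ through $\varphi_{1,k}=x_k-\alpha_k\beta_k\nabla g(x_k)$ and apply cocoercivity of $\nabla g$ as in (\ref{lemma-eqn-12-4-1})--(\ref{lemma-eqn-12-5-1}); carrying the $\eta$-weighting through this expansion (and through the cross term between $p$ and $\nabla g(x_k)$ that appears when $x_k$ replaces $\varphi_{1,k}$ in the linear part) is what converts the theorem's coefficient $\alpha_k\beta_k-\tfrac{2}{L_g}$ into the stated $\bigl(1+\tfrac{\eta}{2(1+\eta)}\bigr)\alpha_k\beta_k-\tfrac{2}{L_g(1+\eta)}$.

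The heart of the argument, and the step I expect to be the main obstacle, is the emergence of the Fenchel-conjugate gap. After aggregating the linear contributions over $i$, the collected term is $2\alpha_k\<\sum_{i=1}^m(v_i+\nabla h_i(u)),u-x_k\>$, which by the hypothesis $0=p+\sum_i v_i+\sum_i\nabla h_i(u)$ equals $2\alpha_k\<p,x_k-u\>$. Here I would invoke the Attouch--Czarnecki device: since $u\in\arg\min g$ and $N_{\arg\min g}(u)$ is a cone, $\tfrac{2p}{\frac{\eta}{1+\eta}\beta_k}\in N_{\arg\min g}(u)$, so $\sigma_{\arg\min g}\bigl(\tfrac{2p}{\frac{\eta}{1+\eta}\beta_k}\bigr)=\<\tfrac{2p}{\frac{\eta}{1+\eta}\beta_k},u\>$, while Fenchel--Young gives $\<\tfrac{2p}{\frac{\eta}{1+\eta}\beta_k},x_k\>\le g(x_k)+g^*\bigl(\tfrac{2p}{\frac{\eta}{1+\eta}\beta_k}\bigr)$. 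Subtracting and multiplying by $\tfrac{\eta}{1+\eta}\alpha_k\beta_k$ turns $2\alpha_k\<p,x_k-u\>$ into the penalty value $\tfrac{\eta}{1+\eta}\alpha_k\beta_k\,g(x_k)$ (moved to the left) plus the gap $\tfrac{\eta}{1+\eta}\alpha_k\beta_k\bigl[g^*(\cdot)-\sigma_{\arg\min g}(\cdot)\bigr]$. The delicate bookkeeping is precisely to choose the scaling $\tfrac{2p}{\frac{\eta}{1+\eta}\beta_k}$ so that the $g(x_k)$ coefficients cancel, and to check that the displacement corrections generated when $x_k$ replaces the base points $\varphi_{i,k}$ are absorbed into the $\|\nabla h_i(u)+v_i\|^2$ term rather than spoiling the sign of the retained $\|\varphi_{i+1,k}-\varphi_{i,k}\|^2$ contribution. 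Collecting all terms then yields the stated inequality.
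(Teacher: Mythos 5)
Your skeleton matches the paper's proof almost step for step: the prox inclusion combined with monotonicity of $\partial f_i$ against the reference point $v_i$, the cosine identity, cocoercivity of $\nabla h_i$ via Baillon--Haddad, Young's inequality with the free parameter $\eta$, the triangle-inequality bookkeeping that produces the $m(m+1)$ factor, cocoercivity of $\nabla g$ for the penalty step, and the Attouch--Czarnecki device $\sigma_{\arg\min g}(q)=\langle q,u\rangle$ for $q\in N_{\arg\min g}(u)$ with the scaling $q=\frac{2p}{\frac{\eta}{1+\eta}\beta_k}$. Up to minor bookkeeping differences, all of that is exactly how the paper proceeds.

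However, there is a genuine gap at what you yourself call the heart of the argument: your mechanism produces the term $\frac{\eta}{1+\eta}\alpha_k\beta_k g(x_k)$ with the wrong sign. You bound $2\alpha_k\langle p,x_k-u\rangle$ by Fenchel--Young in the additive form $\langle q,x_k\rangle\le g(x_k)+g^*(q)$, so $g(x_k)$ appears on the right-hand side with a plus sign; moving it across the inequality leaves $-\frac{\eta}{1+\eta}\alpha_k\beta_k g(x_k)$ on the left, not the $+\frac{\eta}{1+\eta}\alpha_k\beta_k g(x_k)$ that the lemma asserts. Since $g\ge 0$, what you obtain is strictly weaker and useless downstream: Lemma \ref{key-lemma4-2}(ii) needs precisely this positive retained term to conclude $\sum_{k}\alpha_k\beta_k g(x_k)<+\infty$. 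The missing ingredient is the gradient (convexity) inequality for $g$ at the penalty step: since $g(u)=\min g=0$, one has $-2\alpha_k\beta_k\langle\nabla g(x_k),x_k-u\rangle\le-2\alpha_k\beta_k g(x_k)$. The paper splits the penalty cross term as a convex combination, bounding the fraction $\frac{1}{1+\eta}$ by cocoercivity (whence the $\frac{2}{L_g(1+\eta)}$ you reproduce) and the fraction $\frac{\eta}{1+\eta}$ by this convexity bound, which creates a budget of $-\frac{2\eta}{1+\eta}\alpha_k\beta_k g(x_k)$ on the right-hand side: one half is carried to the left as the stated positive term, and the other half is exactly the $-g(x_k)$ needed to assemble $\langle q,x_k\rangle-g(x_k)\le g^*(q)$, so that Fenchel--Young never has to generate a $g(x_k)$ of its own. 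Note also that without this step your $\eta$-split of the $\nabla g$ coefficient is unmotivated: you never say what the remaining $\frac{\eta}{1+\eta}$ fraction of the cross term $-2\alpha_k\beta_k\langle\nabla g(x_k),x_k-u\rangle$ is used for, and it is used for exactly this.
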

		\begin{proof} Let $k\geq1$ be fixed. For all $i=1,\ldots,m$, it follows from the definition of proximity operator $\alpha_k f_i$ that $\varphi_{i,k}-\alpha_k\nabla h_i(\varphi_{i,k})-\varphi_{i+1,k}\in\alpha_k\partial f_i(\varphi_{i+1,k})$. Since $v_i\in\partial f_i(u)$, the monotonicity of $\partial f_i$ implies that 
			\begin{eqnarray*}\<\varphi_{i,k}-\alpha_k\nabla h_i(\varphi_{i,k})-\varphi_{i+1,k}-\alpha_k v_i,\varphi_{i+1,k}-u\>\geq0,
			\end{eqnarray*}
			or  equivalently,
			\begin{eqnarray*}\<\varphi_{i,k}-\varphi_{i+1,k},u-\varphi_{i+1,k}\>\leq \alpha_k\<\nabla h_i(\varphi_{i,k})+v_i,u-\varphi_{i+1,k}\>.
			\end{eqnarray*}
			This implies that
			\begin{eqnarray}\label{lemma-eqn-1}\|\varphi_{i+1,k}-u\|^2-\|\varphi_{i,k}-u\|^2+\|\varphi_{i+1,k}-\varphi_{i,k}\|^2\leq 2\alpha_k\<\nabla h_i(\varphi_{i,k})+ v_i,u-\varphi_{i+1,k}\>.
			\end{eqnarray}
			
			Summing up the inequalities (\ref{lemma-eqn-1}) for all $i=1,\ldots,m$, we obtain
			\begin{eqnarray}\label{lemma-eqn-2}\|\varphi_{m+1,k}-u\|^2-\|\varphi_{1,k}-u\|^2&+&\sum_{i=1}^m\|\varphi_{i+1,k}-\varphi_{i,k}\|^2\nonumber\\
			&\leq& 2\alpha_k\sum_{i=1}^m\<\nabla h_i(\varphi_{i,k})+ v_i,u-\varphi_{i+1,k}\>\nonumber\\
			&=& 2\alpha_k\sum_{i=1}^m\<\nabla h_i(\varphi_{i,k})- \nabla h_i(u),u-\varphi_{i,k}\>\nonumber\\
			&&+ 2\alpha_k\sum_{i=1}^m\<\nabla h_i(\varphi_{i,k})- h_i(u),\varphi_{i,k}-\varphi_{i+1,k}\>\nonumber\\
			&&	+ 2\alpha_k\sum_{i=1}^m\<\nabla h_i(u)+v_i,u-\varphi_{i+1,k}\>.
			\end{eqnarray}
			
			Let us consider the first term in the right-hand side of (\ref{lemma-eqn-2}). For each $i=1,\ldots,m$, we note that $\nabla h_i$ is $\frac{1}{L_i}$-cocoercive, that is,
			\begin{eqnarray*}
				\<\nabla h_i(\varphi_{i,k})- \nabla h_i(u),\varphi_{i,k}-u\>\geq\frac{1}{L_i}\|\nabla h_i(\varphi_{i,k})-\nabla h_i(u)\|^2,
			\end{eqnarray*}
			and so
			\begin{eqnarray}\label{lemma-eqn-3}
			2\alpha_k\sum_{i=1}^m\<\nabla h_i(\varphi_{i,k})- \nabla h_i(u),u-\varphi_{i,k}\>
			&\leq&-2\alpha_k \sum_{i=1}^m\frac{1}{L_i}\|\nabla h_i(\varphi_{i,k})-\nabla h_i(u)\|^2\nonumber\\
			&\leq& \frac{-2\alpha_k}{\max_{1\leq i\leq m}L_i}\sum_{i=1}^m\|\nabla h_i(\varphi_{i,k})-\nabla h_i(u)\|^2.
			\end{eqnarray}
			
			For the second term of the right-hand side of (\ref{lemma-eqn-2}), for each $i=1,\ldots,m$, we note that
			\begin{eqnarray*}
				2\alpha_k\<\nabla h_i(\varphi_{i,k})- h_i(u),\varphi_{i,k}-\varphi_{i+1,k}\>&\leq& \frac{2(1+\eta)}{\eta}\alpha_k^2\|\nabla h_i(\varphi_{i,k})-\nabla h_i(u)\|^2\\
				&&+\frac{\eta}{2(1+\eta)}\|\varphi_{i,k}-\varphi_{i+1,k}\|^2,
			\end{eqnarray*}
			which yields
			\begin{eqnarray}\label{lemma-eqn-4}
			2\alpha_k\sum_{i=1}^m\<\nabla h_i(\varphi_{i,k})- h_i(u),\varphi_{i,k}-\varphi_{i+1,k}\>&\leq& \frac{2(1+\eta)}{\eta}\alpha_k^2\sum_{i=1}^m\|\nabla h_i(\varphi_{i,k})-\nabla h_i(u)\|^2\nonumber\\
			&&+\frac{\eta}{2(1+\eta)}\sum_{i=1}^m\|\varphi_{i,k}-\varphi_{i+1,k}\|^2.
			\end{eqnarray}

			Substituting (\ref{lemma-eqn-3}) and (\ref{lemma-eqn-4}) in (\ref{lemma-eqn-2}), we obtain
			\begin{eqnarray}\label{lemma-eqn-4-2}\|\varphi_{m+1,k}-u\|^2-\|\varphi_{1,k}-u\|^2&+&\left(1-\frac{\eta}{2(1+\eta)}\right)\sum_{i=1}^m\|\varphi_{i+1,k}-\varphi_{i,k}\|^2\nonumber\\
			&\leq& 
			\alpha_k\left(\frac{2(1+\eta)}{\eta}\alpha_k-\frac{2}{\max_{1\leq i\leq m}L_i}\right)\sum_{i=1}^m\|\nabla h_i(\varphi_{i,k})-\nabla h_i(u)\|^2\nonumber\\
			&&	+ 2\alpha_k\sum_{i=1}^m\<\nabla h_i(u)+v_i,u-\varphi_{i+1,k}\>.
			\end{eqnarray}
			
			Now, for the last term in the right-hand side of (\ref{lemma-eqn-4-2}), we have
			\begin{eqnarray}\label{lemma-eqn-5}
			2\alpha_k\sum_{i=1}^m\<\nabla h_i(u)+v_i,x_{k}-\varphi_{i+1,k}\>&\leq& \frac{2m(m+1)(1+\eta)}{\eta}\alpha_k^2\sum_{i=1}^m\|\nabla h_i(u)+v_i\|^2\nonumber\\
			&&+\frac{\eta}{2m(m+1)(1+\eta)}\sum_{i=1}^m\|x_{k}-\varphi_{i+1,k}\|^2.
			\end{eqnarray}
			
			Now, for each $i=1,\ldots,m$,  the triangle inequality yields
			\begin{eqnarray}\label{lemma-eqn-5-2}
			\|x_{k}-\varphi_{i+1,k}\|&\leq&\|x_{k}-\varphi_{1,k}\|+\sum_{j=1}^{i}\|\varphi_{j+1,k}-\varphi_{j,k}\|\\
			&\leq&\|x_{k}-\varphi_{1,k}\|+\sum_{i=1}^m\|\varphi_{i+1,k}-\varphi_{i,k}\|,\nonumber
			\end{eqnarray}
			and so
			\begin{eqnarray*}
				\|x_{k}-\varphi_{i+1,k}\|^2&\leq&\left(\|x_{k}-\varphi_{1,k}\|+\sum_{i=1}^m\|\varphi_{i+1,k}-\varphi_{i,k}\|\right)^2\\
				&\leq&(m+1)\left(\|x_{k}-\varphi_{1,k}\|^2+\sum_{i=1}^m\|\varphi_{i+1,k}-\varphi_{i,k}\|^2\right).
			\end{eqnarray*}
			Summing up the above inequalities for all $i=1,\ldots,m$, we have
			\begin{eqnarray*}
				\sum_{i=1}^m\|x_{k}-\varphi_{i+1,k}\|^2	&\leq&m(m+1)\left(\|x_{k}-\varphi_{1,k}\|^2+\sum_{i=1}^m\|\varphi_{i+1,k}-\varphi_{i,k}\|^2\right).
			\end{eqnarray*}
			
			Multiplying this inequality by $\frac{\eta}{2m(m+1)(1+\eta)}$, the inequality (\ref{lemma-eqn-5}) becomes
			\begin{eqnarray}\label{lemma-eqn-6}
			2\alpha_k\sum_{i=1}^m\<\nabla h_i(u)+v_i,x_{k}-\varphi_{i+1,k}\>&\leq& \frac{2m(m+1)(1+\eta)}{\eta}\alpha_k^2\sum_{i=1}^m\|\nabla h_i(u)+v_i\|^2\nonumber\\
			&&+\frac{\eta}{2(1+\eta)}\|x_{k}-\varphi_{1,k}\|^2+\frac{\eta}{2(1+\eta)}\sum_{i=1}^m\|\varphi_{i+1,k}-\varphi_{i,k}\|^2,\nonumber\\
			\end{eqnarray}
			and then
			\begin{eqnarray}\label{lemma-eqn-12-2}
			2\alpha_k\sum_{i=1}^m\<\nabla h_i(u)+v_i,u-\varphi_{i+1,k}\>
			&\leq& 2\alpha_k\sum_{i=1}^m\<\nabla h_i(u)+v_i,u-x_k\>\nonumber\\
			&&+2\alpha_k\sum_{i=1}^m\<\nabla h_i(u)+v_i,x_k-\varphi_{i+1,k}\>\nonumber\\
			&\leq& 2\alpha_k\sum_{i=1}^m\<\nabla h_i(u)+v_i,u-x_k\>\nonumber\\
			&&\frac{2m(m+1)(1+\eta)}{\eta}\alpha_k^2\sum_{i=1}^m\|\nabla h_i(u)+v_i\|^2	\nonumber\\
			&&+\frac{\eta}{2(1+\eta)}\|x_{k}-\varphi_{1,k}\|^2	+\frac{\eta}{2(1+\eta)}\sum_{i=1}^m\|\varphi_{i+1,k}-\varphi_{i,k}\|^2.\nonumber
			\end{eqnarray}
			
			Hence (\ref{lemma-eqn-4-2}) becomes
			\begin{eqnarray}\label{lemma-eqn-12-3}&&\|\varphi_{m+1,k}-u\|^2-\|\varphi_{1,k}-u\|^2+\left(1-\frac{\eta}{(1+\eta)}\right)\sum_{i=1}^m\|\varphi_{i+1,k}-\varphi_{i,k}\|^2\nonumber\\
			&\leq& 	\alpha_k\left(\frac{2(1+\eta)}{\eta}\alpha_k-\frac{2}{\max_{1\leq i\leq m}L_i}\right)\sum_{i=1}^m\|\nabla h_i(\varphi_{i,k})-\nabla h_i(u)\|^2\nonumber\\
			&&	+ \frac{2m(m+1)(1+\eta)}{\eta}\alpha_k^2\sum_{i=1}^m\|\nabla h_i(u)+v_i\|^2+ \frac{\eta}{2(1+\eta)}\alpha_k^2\beta_k^2\|\nabla g(x_k)\|^2\nonumber\\ &&2\alpha_k\sum_{i=1}^m\<\nabla h_i(u)+v_i,u-x_{k}\>.
			\end{eqnarray}
			
			On the other hand, from the definition of $\varphi_{1,k}$, we note that
			\begin{eqnarray}\label{lemma-eqn-12-4}
			\|\varphi_{1,k}-u\|^2&=&\|\varphi_{1,k}-x_k\|^2+\|x_k-u\|^2+2\<\varphi_{1,k}-x_k,x_k-u\>\nonumber\\
			&=&\alpha_k^2\beta_k^2\|\nabla g(x_k)\|^2+\|x_k-u\|^2-2\alpha_k\beta_k\<\nabla g(x_k),x_k-u\>.
			\end{eqnarray}
			
			Furthermore, since $\nabla g$ is $\frac{1}{L_g}$-cocoercive and $\nabla g(u)=0$, we have
			\begin{eqnarray*}
				2\alpha_k\beta_k\<\nabla g(x_k),x_k-u\>
				&=&2\alpha_k\beta_k\<\nabla g(x_k)-\nabla g(u),x_k-u\>\nonumber\\
				&\geq&\frac{2\alpha_k\beta_k}{L_g}\|\nabla g(x_k)-\nabla g(u)\|^2=\frac{2\alpha_k\beta_k}{L_g}\|\nabla g(x_k)\|^2,
			\end{eqnarray*}
			which implies that
			\begin{eqnarray}\label{lemma-eqn-12-5}
			-2\alpha_k\beta_k\<\nabla g(x_k),x_k-u\>
			&\leq&-\frac{2\alpha_k\beta_k}{L_g}\|\nabla g(x_k)\|^2.
			\end{eqnarray}
			
			Moreover, since $g$ is convex and $g(u)=0$, we have
			\begin{eqnarray}\label{lemma-eqn-12-6}
			-2\alpha_k\beta_k\<\nabla g(x_k),x_k-u\>	&=& 2\alpha_k\beta_k\<\nabla g(x_k),u-x_k\>\nonumber\\
			&\leq& 2\alpha_k\beta_k\left(g(u)-g(x_k)\right)=-2\alpha_k\beta_kg(x_k).
			\end{eqnarray}
			
			Combining (\ref{lemma-eqn-12-5}) and (\ref{lemma-eqn-12-6}), we obtain
			\begin{eqnarray*}
				-2\alpha_k\beta_k\<\nabla g(x_k),x_k-u\>	
				&\leq& -\frac{2\alpha_k\beta_k}{L_g(1+\eta)}\|\nabla g(x_k)\|^2-\frac{2\eta}{1+\eta}\alpha_k\beta_kg(x_k).
			\end{eqnarray*}
			
			From this inequality, together with the inequality (\ref{lemma-eqn-12-4}) and the definition of $x_{k+1}$, it follows that
			\begin{eqnarray}\label{lemma-eqn-12-7}&&\|x_{k+1}-u\|^2-\|x_k-u\|^2+\left(1-\frac{\eta}{(1+\eta)}\right)\sum_{i=1}^m\|\varphi_{i+1,k}-\varphi_{i,k}\|^2+\frac{\eta}{1+\eta}\alpha_k\beta_kg(x_k)\nonumber\\
			&\leq& 	\alpha_k\left(\frac{2(1+\eta)}{\eta}\alpha_k-\frac{2}{\max_{1\leq i\leq m}L_i}\right)\sum_{i=1}^m\|\nabla h_i(\varphi_{i,k})-\nabla h_i(u)\|^2\nonumber\\
			&&+\left(\left(1+\frac{\eta}{2(1+\eta)}\right)\alpha_k\beta_k-\frac{2}{L_g(1+\eta)}\right)\alpha_k\beta_k\|\nabla g(x_k)\|^2\nonumber\\
			&&	+ \frac{2m(m+1)(1+\eta)}{\eta}\alpha_k^2\sum_{i=1}^m\|\nabla h_i(u)+v_i\|^2\nonumber\\ 
			&&2\alpha_k\sum_{i=1}^m\<\nabla h_i(u)+v_i,u-x_{k}\>-\frac{\eta}{1+\eta}\alpha_k\beta_kg(x_k).
			\end{eqnarray}

			In particular, since $p\in\argmin g$, we have
			{\small		\begin{eqnarray*}
					2\alpha_k\sum_{i=1}^m\<\nabla h_i(u)+v_i,u-x_{k}\>-\frac{\eta}{1+\eta}\alpha_k\beta_kg(x_k)
					&=&-2\alpha_k\<p,u-x_{k}\>-\frac{\eta}{1+\eta}\alpha_k\beta_kg(x_{k})\\
					&=&2\alpha_k\<p,x_{k}\>-\frac{\eta}{1+\eta}\alpha_k\beta_kg(x_{k})-2\alpha_k\<p,u\>\\
					&=&\frac{\eta}{1+\eta}\alpha_k\beta_k\left[\left\langle\frac{2p}{\frac{\eta}{1+\eta}\beta_k},x_{k}\right\rangle-g(x_{k})-\left\langle\frac{2p}{\frac{\eta}{1+\eta}\beta_k},u\right\rangle\right]\\
					&\leq&\frac{\eta}{1+\eta}\alpha_k\beta_k\left[g^*\left(\frac{2p}{\frac{\eta}{1+\eta}\beta_k}\right)-\left\langle\frac{2p}{\frac{\eta}{1+\eta}\beta_k},u\right\rangle\right]\\
					&=&\alpha_k\beta_k\left[g^*\left(\frac{2p}{\frac{\eta}{1+\eta}\beta_k}\right)-\sigma_{\argmin g}\left(\frac{2p}{\frac{\eta}{1+\eta}\beta_k},u\right)\right].
				\end{eqnarray*}
			}
			Combining this relation and (\ref{lemma-eqn-12-7}),  the required inequality is finally obtained.					
		\end{proof}
		
					The following proposition also plays an essential role in  convergence analysis.  
		\begin{proposition}\label{lemma-alvarez} \cite{P87}
			Let $(a_k)_{k\geq1}$, $(b_k)_{k\geq1}$, and $(c_k)_{k\geq1}$ be real sequences. Assume that $(a_k)_{k\geq1}$ is bounded from below, $(b_k)_{k\geq1}$ is nonnegative, $\sum_{k=1}^\infty c_k<+\infty$, and 
			$$a_{k+1}-a_k+b_k\leq c_k \indent \forall k \geq 1.$$
			Then the sequence  $(a_k)_{k\geq1}$ converges and $\sum_{k=1}^\infty b_k<+\infty$.
		\end{proposition}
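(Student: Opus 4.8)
The plan is to reduce the two-sided recursive inequality to a monotone sequence by absorbing the summable term $c_k$ into a shift, which then delivers both conclusions at once. First I would note that nonnegativity of $(b_k)$ already yields the one-sided bound $a_{k+1}\le a_k+c_k$, but to extract convergence and summability simultaneously it is cleaner to track a telescoped quantity. Accordingly, set $S_1:=0$ and $S_k:=\sum_{j=1}^{k-1}c_j$ for $k\ge 2$; since $\sum_{k=1}^\infty c_k$ is finite, the partial sums $(S_k)_{k\ge1}$ converge, and in particular $(S_k)_{k\ge1}$ is bounded.

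Next I would introduce the auxiliary sequence $u_k:=a_k-S_k$ and examine its increment. Using $S_{k+1}-S_k=c_k$ together with the hypothesis $a_{k+1}-a_k+b_k\le c_k$, one finds $u_{k+1}-u_k=(a_{k+1}-a_k)-c_k\le -b_k\le 0$, so $(u_k)_{k\ge1}$ is nonincreasing. Since $(a_k)_{k\ge1}$ is bounded below and $(S_k)_{k\ge1}$ is bounded, the sequence $u_k=a_k-S_k$ is bounded below as well; a nonincreasing sequence that is bounded below converges, say $u_k\to u_\infty\in\R$. Because $a_k=u_k+S_k$ and both summands converge, the sequence $(a_k)_{k\ge1}$ converges, which is the first assertion.

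For the summability of $(b_k)_{k\ge1}$, I would sum the defining inequality directly: rearranging gives $b_k\le c_k+a_k-a_{k+1}$, hence for every $N\ge 1$ the telescoping bound $\sum_{k=1}^N b_k\le \sum_{k=1}^N c_k+a_1-a_{N+1}$ holds. Letting $N\to+\infty$ and using that $\sum_{k=1}^\infty c_k$ is finite while $a_{N+1}\to a_\infty$ is finite yields $\sum_{k=1}^\infty b_k<+\infty$, completing the argument.

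Since this is an elementary real-analysis fact, I do not expect a serious obstacle; the only point requiring a moment's care is the reading of the hypothesis $\sum_{k=1}^\infty c_k<+\infty$ as asserting convergence of the partial sums $(S_k)_{k\ge1}$ (so that they are both bounded and convergent). This is precisely what makes the monotone-convergence step and the telescoping estimate go through even without assuming the $c_k$ to be nonnegative, and it is the hinge on which the whole proof turns.
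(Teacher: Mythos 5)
Your proof is correct; in fact the paper offers no proof of this proposition at all (it is quoted from Polyak \cite{P87}), so there is nothing internal to compare against, and your argument---passing to the nonincreasing, bounded-below auxiliary sequence $u_k=a_k-\sum_{j=1}^{k-1}c_j$ to get convergence of $(a_k)_{k\geq 1}$, then telescoping $b_k\le c_k+a_k-a_{k+1}$ to get $\sum_{k=1}^\infty b_k<+\infty$---is exactly the classical proof of this lemma. Your closing caveat is also well placed: reading $\sum_{k=1}^{\infty}c_k<+\infty$ as convergence of the partial sums is precisely what lets the argument go through without assuming $c_k\ge 0$, the nonnegative case (as in Polyak's original formulation) being a special instance.
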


		The following lemma is a collection of  some convergence properties of the sequences involved in our analysis.
		
		\begin{lemma}\label{key-lemma4-2} The following statements hold:
			
			(i) The sequence $(x_k)_{k\geq1}$ is  quasi-Fej\'{e}r monotone relative to $\mathcal{S}$.
			
			(ii) For each $u\in \mathcal{S}$, the limit $\lim_{k\to+\infty}\|x_k-u\|$ exists. Moreover, we have $\sum_{k=1}^\infty\alpha_k\beta_kg(x_{k})<+\infty$, 
			$\sum_{k=1}^\infty\alpha_k\beta_k\|\nabla g(x_{k})\|^2<+\infty$, and $\sum_{k=1}^\infty\sum_{i=1}^m\|\varphi_{i+1,k}-\varphi_{i,k}\|^2<+\infty$.
			
			(iii) $\lim_{k\to+\infty}g(x_{k})=\lim_{k\to+\infty}\|\nabla g(x_{k})\|=\lim_{k\to+\infty}\sum_{i=1}^m\|\varphi_{i+1,k}-\varphi_{i,k}\|^2=\lim_{k\to+\infty}\|x_k-\varphi_{1,k}\|=0$.
			
			(iv) Every sequential cluster point of the sequence $(x_k)_{k\geq1}$ lies in $\argmin g$.
		\end{lemma}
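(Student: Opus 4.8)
The plan is to extract all four statements from the master inequality in Lemma~\ref{IPGP-lemma-12} by calibrating its free parameter $\eta$. First I would fix $u\in\mathcal{S}$ and assemble the data needed to invoke that lemma: since $u$ minimizes $\sum_{i=1}^m F_i=\sum_{i=1}^m f_i+\sum_{i=1}^m h_i$ over $\arg\min g$, the first-order optimality condition gives $0\in\partial\bigl(\sum_i f_i+\delta_{\arg\min g}\bigr)(u)+\sum_i\nabla h_i(u)$, and hypothesis (H1) splits the subdifferential, so there exist $v_i\in\partial f_i(u)$ and $p\in N_{\arg\min g}(u)$ with $0=p+\sum_i v_i+\sum_i\nabla h_i(u)$. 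With this triple, Lemma~\ref{IPGP-lemma-12} holds for every $k\geq1$ and every $\eta>0$.

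The crux is the choice of $\eta$. Writing $s:=\limsup_k\alpha_k\beta_k$, hypothesis (H3) gives $s<2/L_g$, and since $(1+\tfrac{3\eta}{2})s\to s$ as $\eta\to0^+$ I can fix $\eta>0$ small enough that $(1+\tfrac{3\eta}{2})s<2/L_g$. A short computation shows this is exactly what forces the bracket multiplying $\alpha_k\beta_k\|\nabla g(x_k)\|^2$ to be eventually $\leq-\delta<0$ for some $\delta>0$, so that whole term is $\leq-\delta\,\alpha_k\beta_k\|\nabla g(x_k)\|^2$ for $k\geq k_0$. In the same regime: $\alpha_k\to0$ (from $\sum\alpha_k^2<\infty$ in (H2)) makes the coefficient $\tfrac{2(1+\eta)}{\eta}\alpha_k-\tfrac{2}{\max_{1\le i\le m}L_i}$ negative, so the $\nabla h_i$-difference term is $\leq0$; the term $\tfrac{2m(m+1)(1+\eta)}{\eta}\alpha_k^2\sum_i\|\nabla h_i(u)+v_i\|^2$ is $O(\alpha_k^2)$ hence summable by (H2); and, rewriting the argument as $\tfrac{2p}{(\eta/(1+\eta))\beta_k}=\tilde p/\beta_k$ with $\tilde p:=\tfrac{2(1+\eta)}{\eta}p$, the fact that $N_{\arg\min g}$ is a cone gives $\tilde p\in\ran(N_{\arg\min g})$, whence (H4) makes the last (nonnegative) term summable. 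I denote by $c_k$ the sum of the two summable contributions, so $\sum_k c_k<\infty$.

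Part (i) then falls out at once: discarding the manifestly nonnegative quantities on the left of Lemma~\ref{IPGP-lemma-12} and the two nonpositive terms on the right leaves $\|x_{k+1}-u\|^2\leq\|x_k-u\|^2+c_k$ for $k\geq k_0$, which is the quasi-Fej\'er property relative to $\mathcal{S}$ with summable errors $(c_k)$. For part (ii) I would instead retain those nonnegative quantities, moving the negative gradient term to the left, and set $a_k:=\|x_k-u\|^2$ together with $b_k:=\tfrac{\eta}{1+\eta}\alpha_k\beta_k g(x_k)+\tfrac{1}{1+\eta}\sum_i\|\varphi_{i+1,k}-\varphi_{i,k}\|^2+\delta\,\alpha_k\beta_k\|\nabla g(x_k)\|^2$, so that $a_{k+1}-a_k+b_k\leq c_k$ with $a_k$ bounded below, $b_k\geq0$, and $\sum_k c_k<\infty$. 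Proposition~\ref{lemma-alvarez} (applied from $k_0$ on, the finitely many earlier indices being irrelevant) yields both the existence of $\lim_k\|x_k-u\|$ and $\sum_k b_k<\infty$, which is precisely the trio of summability statements in (ii), using $g(x_k)\geq0$ since $\min g=0$.

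Parts (iii) and (iv) are then short. Since (H3) gives $\alpha_k\beta_k\geq c>0$ eventually, the convergent series from (ii) force $g(x_k)\to0$, $\|\nabla g(x_k)\|^2\to0$ (hence $\|\nabla g(x_k)\|\to0$), and $\sum_i\|\varphi_{i+1,k}-\varphi_{i,k}\|^2\to0$; moreover $\|x_k-\varphi_{1,k}\|=\alpha_k\beta_k\|\nabla g(x_k)\|\to0$ because $\alpha_k\beta_k$ stays bounded by (H3), giving (iii). For (iv), if $x_{k_j}\to z$ then continuity of $g$ and $g(x_k)\to0=\min g$ give $g(z)=\min g$, i.e.\ $z\in\arg\min g$. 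The one genuinely delicate step is the calibration in the second paragraph: tuning $\eta$ against the $\limsup$ in (H3) so the gradient coefficient becomes strictly negative while the rescaled conjugate argument $\tilde p/\beta_k$ stays within the scope of (H4); all the rest is bookkeeping of signs and summation.
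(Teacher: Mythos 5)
Your proposal is correct and follows essentially the same route as the paper: invoke Lemma \ref{IPGP-lemma-12}, calibrate $\eta$ against $\limsup_k \alpha_k\beta_k$ (your condition $(1+\tfrac{3\eta}{2})s < \tfrac{2}{L_g}$ is exactly the paper's admissible interval for $\eta_0$), make the $\nabla g$ and $\nabla h_i$ coefficients eventually negative, and then extract (i)--(ii) via quasi-Fej\'er monotonicity and Proposition \ref{lemma-alvarez}, with (iii)--(iv) following from (H3) and continuity of $g$. In fact you are slightly more explicit than the paper on two points it leaves implicit: producing $p$ and $v_i$ from the optimality condition together with (H1), and using the cone property of $N_{\arg\min g}$ to justify applying (H4) to the rescaled vector $\tfrac{2(1+\eta)}{\eta}p$.
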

		\begin{proof} (i) Since $\limsup_{k\to+\infty}\alpha_k\beta_k<\frac{2}{L_g}$, there exists $k_0\in\N$ such that $\alpha_k\beta_k<\frac{2}{L_g}$
			for all $k\geq k_0$.                  Now, by picking $\eta_0\in\left(0,\frac{2\left(\frac{2}{L_g}-\limsup_{k\to+\infty}\alpha_k\beta_k\right)}{3\limsup_{k\to+\infty}\alpha_k\beta_k}\right)$, we have  $\alpha_k\beta_k<\frac{4}{L_g(2+3\eta_0)}$ for all $k\geq k_0$. Hence, there must exist $M>0$ such that for all $k\geq k_0$, we have
			$$\alpha_k\beta_k < M < \frac{4}{L_g(2+3\eta_0)}=\frac{2}{L_g(1+\eta_0)\left(1+\frac{\eta_0}{2(1+\eta_0)}\right)}.$$
			On the other hand, since $\alpha_k\to 0$, there exists $k_1\in\N$ such that 
			$$\frac{2(1+\eta_0)}{\eta_0}\alpha_k-\frac{2}{\max_{1\leq i\leq m}L_i}<0, \indent \forall k\geq k_1.$$
			Let $u\in \mathcal{S}$ be given.  	For every $k\geq \max\{k_0,k_1\}$, we have from Lemma \ref{IPGP-lemma-12}  that
			\begin{eqnarray}\label{eqn-vip}
			&&\|x_{k+1}-u\|^2-\|x_{k}-u\|^2+\frac{\eta_0}{1+\eta_0}\alpha_k\beta_k g(x_{k})+\left(1-\frac{\eta_0}{1+\eta_0}\right)\sum_{i=1}^m\|\varphi_{i+1,k}-\varphi_{i,k}\|^2\nonumber\\
			&&+\left(\frac{2}{L_g(1+\eta_0)}-\left(1+\frac{\eta_0}{2(1+\eta_0)}\right)M\right)\alpha_k\beta_k\|\nabla g(x_k)\|^2\nonumber\\
			&\leq&\frac{2m(m+1)(1+\eta_0)}{\eta_0}\alpha_k^2\sum_{i=1}^m\|\nabla h_i(u)+v_i\|^2\nonumber\\
			&&+\frac{\eta_0}{1+\eta_0}\alpha_k\beta_k\left[g^*\left(\frac{2p}{\frac{\eta_0}{1+\eta_0}\beta_k}\right)-\sigma_{\arg\min g}\left(\frac{2p}{\frac{\eta_0}{1+\eta_0}\beta_k}\right)\right].
			\end{eqnarray}
			Since  the right-hand side is summable and the last three terms of the left-hand side are nonnegative, the sequence $(x_k)_{k\geq1}$ is  quasi-Fej\'{e}r monotone relative to $\mathcal{S}$.
			
			(ii)	Since the right-hand side of (\ref{eqn-vip}) is summable, the statement in (ii) follows immediately from Proposition \ref{lemma-alvarez}.
			
			(iii)	By (ii), it is obvious that $\lim_{k\to+\infty}\|x_{k}-\varphi_{1,k}\|^2=\lim_{k\to+\infty}\sum_{i=1}^m\|\varphi_{i+1,k}-\varphi_{i,k}\|^2=0$.
			Moreover, by the assumption (H3), we also have $\lim_{k\to+\infty}\|\nabla g(x_{k})\|=\lim_{k\to+\infty}g(x_{k})=0$.
			
			(iv) Let $w$ be a sequential cluster point of $(x_k)_{k\geq1}$ and $(x_{k_j})_{j\geq1}$ be a subsequence of $(x_k)_{k\geq1}$ such that $x_{k_j}\to w$. By the lower semicontinuity of $g$, we obtain
			$$g(w)\leq \liminf_{j\to+\infty}g(x_{k_j})=\lim_{k\to+\infty}g(x_{k})=0,$$
			and so $w\in\arg\min g$.
		\end{proof}

		\subsection*{B. Parameters Combinations for PGM and FISTA}
			In this subsection, we present the ISNR values performed by the classical proximal-gradient method (PGM) and FISTA for various parameters combinations.
				\begin{table}[H]
				\centering
				\setlength{\tabcolsep}{6pt}
				{\scriptsize		\caption{\label{tb-isnr-fista} ISNR values performed by FISTA after 20 iterations for different choices of parameters $\lambda_1$ and $\lambda_2$.}
					\begin{tabular}{@{}l   r r  r r  r r  r r  r r  r r r r r r r r r@{}}
						\toprule  $\lambda_1$ $\rightarrow$  & \multirow{2}{*}{$0.001$} & \multirow{2}{*}{$0.005$} & \multirow{2}{*}{$0.01$} & \multirow{2}{*}{$0.05$} &  \multirow{2}{*}{$0.1$} & \multirow{2}{*}{$0.2$}  & \multirow{2}{*}{$0.3$}  & \multirow{2}{*}{$0.4$} & \multirow{2}{*}{$0.5$} \\
						
						$\lambda_2$  $\downarrow$  	  	 \\ \midrule
					$10^{-8}$   &	1.1003	&5.0197	&9.3646	&15.9509	&15.1522	&13.1999	&11.6012	&10.2787	&9.1532\\
					$10^{-7}$   &	1.1003	&5.0197	&9.3646	&15.9509	&15.1522	&13.1999	&11.6012	&10.2787	&9.1532\\
					$10^{-6}$   &	1.1003	&5.0196	&9.3643	&15.9509	&15.1521	&13.1998	&11.6011	&10.2786	&9.1531\\
						$10^{-5}$&	1.0999	&5.0182	&9.3623	&15.9509	&15.1517	&13.1994	&11.6007	&10.2783	&9.1528\\
						$10^{-4}$&	1.0966	&5.0049	&9.3414	&15.9511	&15.1479	&13.1950	&11.5966	&10.2745	&9.1495\\
						$10^{-3}$&	1.0639	&4.8724	&9.1253	&15.9440	&15.1076	&13.1512	&11.5551	&10.2372	&9.1162\\
						0.005	    & 0.9296  &4.3075  &8.1022	&15.7625	&14.9024	&12.9478  & 11.3680	&10.0706	&8.9685\\
						0.01	      & 0.7854	&3.6751	 &6.9042  &15.3534	 &14.5746	&12.6772	& 11.1297	&9.8619	    &8.7847		\\ \bottomrule
					\end{tabular}
				}
			\end{table}
		In Table \ref{tb-isnr-fista}, we observe that the combination $\lambda_1=0.05$ with $\lambda_2=10^{-4}$ leads to the largest ISNR values of 15.9511 dB.
		
		\begin{table}[H]
			\centering
			\setlength{\tabcolsep}{6pt}
			{\scriptsize		\caption{\label{tb-isnr-pgm-1} ISNR values performed by PGM after 20 iterations for different choices of parameters $\lambda_1$ and $\lambda_2$ with $\gamma=1/(\lambda_2+1)$.}
				\begin{tabular}{@{}l   r r  r r  r r  r r  r r  r r r r r r r r r@{}}
					\toprule  $\lambda_1$ $\rightarrow$  & \multirow{2}{*}{$0.001$} & \multirow{2}{*}{$0.005$} & \multirow{2}{*}{$0.01$} & \multirow{2}{*}{$0.05$} &  \multirow{2}{*}{$0.1$} & \multirow{2}{*}{$0.2$}  & \multirow{2}{*}{$0.3$}  & \multirow{2}{*}{$0.4$} & \multirow{2}{*}{$0.5$} \\
					
					$\lambda_2$  $\downarrow$  	  	 \\ \midrule
					$10^{-8}$ &	0.1896	&0.9280	&1.8158	&7.9777	&13.3422&	13.1484	&11.6114	&10.2972&	9.1721\\
					$10^{-7}$ &0.1896	&0.9280	&1.8158	&7.9777	&13.3422&	13.1484	&11.6114	&10.2972&	9.1721\\
					$10^{-6}$ &	0.1896	&0.9280	&1.8158	&7.9776	&13.3421&	13.1483	&11.6113	&10.2971&	9.1721\\
					$10^{-5}$ &	0.1896	&0.9279	&1.8156	&7.9769	&13.3412&	13.1479	&11.6109	&10.2967&	9.1717\\
					$10^{-4}$ &	0.1894	&0.9270	&1.8140	&7.9695	&13.3318&	13.1433	&11.6068	&10.2930&	9.1684\\
					$10^{-3}$ &	0.1876	&0.9184	 &1.7974 &7.8963&	13.2378&	13.0972&	11.5653&	10.2556&	9.1349\\
					0.005	     & 0.1799	&0.8816	&1.7262	&7.5816	&12.8122	&12.8849	&11.3779	&10.0885	&8.9863\\
					0.01           & 0.1707	 &0.8381 &1.6423  &7.2120&	12.2729	&12.6055	&11.1393	&9.8788	&8.8013	\\ \bottomrule
				\end{tabular}
			}
		\end{table}
		In Table \ref{tb-isnr-pgm-1}, we observe that the combinations $\lambda_1=0.1$ with $\lambda_2=10^{-7}, 10^{-8}$ lead to the largest ISNR values of 13.3422 dB.  
		
		\begin{table}[H]
			\centering
			\setlength{\tabcolsep}{6pt}
			{\scriptsize		\caption{\label{tb-isnr-pgm-2} ISNR values performed by PGM after 20 iterations for different choices of parameters $\lambda_2$ and $\gamma$ with $\lambda_1=0.1$.}
				\begin{tabular}{@{}l   r r  r r  r r  r r  r r  r r r r r r r r r@{}}
					\toprule  $\gamma$ $\rightarrow$  & \multirow{2}{*}{$\frac{1}{(\lambda_2+1)}$} & \multirow{2}{*}{$\frac{1.1}{(\lambda_2+1)}$} & \multirow{2}{*}{$\frac{1.2}{(\lambda_2+1)}$} & \multirow{2}{*}{$\frac{1.3}{(\lambda_2+1)}$} &  \multirow{2}{*}{$\frac{1.4}{(\lambda_2+1)}$} & \multirow{2}{*}{$\frac{1.5}{(\lambda_2+1)}$}  & \multirow{2}{*}{$\frac{1.6}{(\lambda_2+1)}$}  & \multirow{2}{*}{$\frac{1.7}{(\lambda_2+1)}$} & \multirow{2}{*}{$\frac{1.8}{(\lambda_2+1)}$}  & \multirow{2}{*}{$\frac{1.9}{(\lambda_2+1)}$} \\
					
					$\lambda_2$  $\downarrow$  	  	 \\ \midrule
					$10^{-8}$ &13.3422	&13.9381	&14.3418	&14.6012	&14.7664	&14.8744	&14.9489	&15.0031	&15.0458	&15.0842\\
					$10^{-7}$ &13.3422	&13.9381	&14.3418	&14.6012	&14.7664	&14.8744	&14.9489	&15.0031	&15.0458	&15.0842\\
					$10^{-6}$ &13.3421	&13.9380	&14.3417	&14.6012	&14.7663	&14.8744	&14.9489	&15.0030	&15.0458	&15.0842\\
					$10^{-5}$ &13.3412	&13.9371	&14.3409	&14.6005	&14.7657	&14.8738	&14.9483	&15.0025	&15.0453	&15.0837\\
					$10^{-4}$ &13.3318	&13.9284	&14.3331	&14.5936	&14.7595	&14.8680	&14.9429	&14.9973	&15.0402	&15.0787\\
					$10^{-3}$ &13.2378	&13.8399	&14.2537	&14.5231	&14.6960	&14.8091	&14.8871	 &14.9437	 &14.9883	 &15.0278\\
					$0.005$   &12.8122	&13.4286	&13.8748	&14.1807	&14.3841	 &14.5192	  &14.6117	   &14.6783	    &14.7302	&14.7738\\
					$0.01$      &12.2729  &12.8882	  &13.3562	  &13.6962	  &13.9347	  &14.0987	 &14.2119	   &14.2927	    &14.3537	 &14.4025	\\ \bottomrule
				\end{tabular}
			}
		\end{table}
		In Table \ref{tb-isnr-pgm-2}, we observe that the combinations $\lambda_2=10^{-6}, 10^{-7}, 10^{-8}$ with $\gamma=1.9/(\lambda_2+1)$ lead to the largest ISNR values of 15.0842 dB.  
		

	\end{document}